\newtheorem{Theorem}{Theorem}[section]
\newtheorem{Lemma}[Theorem]{Lemma}
\newtheorem{Corollary}[Theorem]{Corollary}
\newtheorem{Proposition}[Theorem]{Proposition}
\newtheorem{Remark}[Theorem]{Remark}
\newtheorem{Definition}[Theorem]{Definition}
\newtheorem{Question}[Theorem]{Question}
\def\fkp{{\frak p}}
\def\fkm{{\frak m}}
\def\fkM{{\frak M}}
\def\fkn{{\frak n}}
\def\opn#1#2{\def#1{\operatorname{#2}}}
\opn\Spec{Spec}
\opn\Supp{Supp}
\opn\supp{supp}
\opn\Max{Max}
\opn\max{max}
\opn\Min{Min}
\opn\min{min}
\opn\Ass{Ass}
\opn\Assh{Assh}
\opn\Ann{Ann}
\opn\depth{depth}
\opn\rank{rank}
\opn\Proj{Proj}
\opn\Mat{Mat}
\opn\Tot{Tot}
\opn\Sym{Sym}
\def\Rees{{\mathcal R}}
\def\bf{\boldsymbol}
\opn\div{div}
\opn\Div{Div}
\opn\cl{cl}
\opn\Cl{Cl}
\opn\Ker{Ker}
\opn\Coker{Coker}
\opn\Im{Im}
\opn\Hom{Hom}
\opn\Tor{Tor}
\opn\Ext{Ext}
\opn\End{End}
\opn\Fitt{Fitt}
\opn\Aut{Aut}
\opn\id{id}
\opn\nat{nat}
\opn\pff{pf}
\opn\Pf{Pf}
\opn\GL{GL}
\opn\SL{SL}
\opn\G{G}
\opn\E{E}
\opn\H{H}
\opn\M{M}
\opn\mod{mod}
\opn\ord{ord}
\opn\det{det}
\opn\Soc{Soc}
\opn\chara{char}
\opn\length{\ell}
\opn\pd{pd}
\opn\rk{rk}
\opn\projdim{proj\,dim}
\opn\injdim{inj\,dim}
\opn\rank{rank}
\opn\depth{depth}
\opn\grade{grade}
\opn\height{ht}
\opn\embdim{emb\,dim}
\opn\codim{codim}
\renewcommand{\bar}{\overline}
\title{Asymptotic vanishing of homogeneous components of multigraded modules and its applications}
\author{Futoshi Hayasaka}
\address{Department of Environmental and Mathematical Sciences, Okayama University, 
3-1-1, Tsushimanaka, Kita-ku, Okayama, 700-8530, JAPAN}
\email{hayasaka@okayama-u.ac.jp}
\keywords{multigraded modules, spread, integral closure, monomial ideals}
\subjclass[2010]{Primary 13B22; Secondary 13A30}
\begin{document}

\maketitle

\begin{abstract}
In this article, we give a condition on the vanishing of finitely many homogeneous components
which must imply the asymptotic vanishing for multigraded modules. 
We apply our result to multi-Rees algebras of ideals. 
As a consequence, we obtain a result on normality of monomial ideals, which extends and improves 
the results of Reid, Roberts and Vitulli \cite{RRV}, Singla \cite{S}, and Sarkar and Verma \cite{SV}. 
\end{abstract}

\section{Introduction}

Let $(A, \fkm)$ be a Noetherian local ring with the maximal ideal $\fkm$ 
and let $R$ be a Noetherian standard $\mathbb N^r$-graded ring with $R_{\bf 0}=A$, i.e., 
$R$ is generated in degrees $\bf e_1, \dots , \bf e_r$ over $A$ with $R_{\bf e_i} \neq (0)$ for all $i=1, \dots , r$. 
Let $M$ be a finitely generated $\mathbb Z^r$-graded $R$-module and let $N$ be a graded $R$-submodule of $M$. 
In this article, we study the asymptotic vanishing property of homogeneous components $[M/N]_{\bf n}$ of the quotient $M/N$. Particularly, we give a condition on the vanishing.

Our main motivation comes from the following facts on the normality of monomial ideals. Let $S=k[X_1, \dots , X_d]$ be a polynomial ring 
over a field $k$ and let $I$ be a monomial ideal in $S$. In 2003,  Reid, Roberts and Vitulli  \cite{RRV} proved that if $I, I^2, \dots , I^{d-1}$ are 
integrally closed, then all the powers of $I$ are integrally closed, i.e., $I$ is a normal ideal. In 2007, Singla improved this result. Let 
$\ell =\lambda (I)$ be the analytic spread of $I$. Then Singla \cite{S} proved that $I$ is a normal ideal if $I, I^2, \dots , I^{\ell-1}$ are integrally closed. 
Note that the inequality $\ell \leq d$ always holds true. On the other hand, more recently, 
Sarkar and Verma \cite{SV} gave the other generalization of the original result of  Reid, Roberts and Vitulli  \cite{RRV}. 
Let $I_1, \dots , I_r$ be $(X_1, \dots , X_d)$-primary
monomial ideals in $S$. They proved that if $\bf I^{\bf n}=I_1^{n_1} \cdots I_r^{n_r}$ is integrally closed for any $\bf n \in \mathbb N^r$ 
with $1 \leq |\bf n| \leq d-1$, where 
$|\bf n|=n_1+\dots +n_r$ denotes the sum of all entries of a vector $\bf n=(n_1, \dots, n_r) \in \mathbb N^r$, 
then all the power products $\bf I^{\bf n}$ are integrally closed.   
Thus, the original result of  Reid, Roberts and Vitulli  \cite{RRV} was partially generalized to finitely many monomial ideals. 
The approach in \cite{SV} is different from the ones in \cite{RRV, S}. In \cite{SV}, the authors used the local cohomology modules of 
the multi-Rees algebra $\Rees(\bf I)$ of ideals $I_1, \dots , I_r$ 
and its normalization $\bar \Rees(\bf I)$ instead of convex geometry as in \cite{RRV, S}. 

The purpose of this article is to extend and improve all of the above results 
with a more general approach inspired by \cite{SV}. 
In order to state our results, 
let us recall some notations associated to multigraded modules. 

For a Noetherian standard $\mathbb N^r$-graded ring $R$ with $R_{\bf 0}=A$, let $R_{++}=R_{\bf e}R$ be the irrelevant ideal of 
$R$ where $\bf e=\bf e_1 + \dots +\bf e_r=(1, \dots , 1) \in \mathbb N^r$. 
Let $R_+=(R_{\bf e_1}, \dots , R_{\bf e_r})R$. Note that $R_{++}=R_+$ when $r=1$. 
Let $\frak M=\frak m R+R_+$ be the homogeneous maximal ideal of $R$. For a finitely generated $R$-module $M$, we set 
$$\Supp_{++}(M)=\{ P \in \Spec R \mid M_P \neq (0), P \ \mbox{is graded and} \ R_{++} \nsubseteq P \}. $$
Then the spread $s(M)$ of $M$ introduced by Kirby and Rees \cite{KR} is defined to be  
$$s(M)=\dim \Supp_{++}(M/\fkm M)+1. $$ 
Here we set $\dim \emptyset =-1$. Note that for an ideal $I$ in $A$, the spread $s(\Rees(I))$ of the Rees algebra of $I$ is just the analytic spread 
$\lambda(I)$ of the ideal $I$. 
For any $i=1, \dots , r$, let 
$$a^i(M)=\sup \{ k \in \mathbb Z \mid [H^{\dim M}_{\frak M}(M)]_{\bf n} \neq (0) \ \mbox{for some} \ \bf n \in \mathbb Z^r \ \mbox{with} \ n_i=k \},  $$
where $H^i_{\frak M}(M)$ is the $i$th local cohomology module of $M$ with respect to $\frak M$ in the category of $\mathbb Z^r$-graded $R$-modules. 
Then the $\bf a$-invariant vector is defined to be 
$$\bf a(M)=(a^1(M), \dots , a^r(M)) \in \mathbb Z^r. $$
Then our main result can be stated as follows.  

\begin{Theorem}\label{main}
Let $R$ be a Noetherian standard $\mathbb N^r$-graded ring such that $R_{\bf 0}=A$ is a local ring. 
Let $M$ be a finitely generated $\mathbb Z^r$-graded $R$-module and let $N$ be a graded $R$-submodule of $M$. 
Suppose that $M$ is a Cohen-Macaulay graded $R$-module. 
Let $\ell \geq |\bf a(M)|+s(M)+r-1$ and assume that 
$$[M/N]_{\bf n}=(0) \ \mbox{for all} \ \bf n \geq \bf a(M) + \bf e \ \mbox{with} \ |\bf n| = \ell. $$ 
Then we have that  
$$[M/N]_{\bf n}=(0) \ \mbox{for all} \ \bf n \geq \bf a(M) + \bf e \ \mbox{with} \ |\bf n| \geq \ell. $$
\end{Theorem}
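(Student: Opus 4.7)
\emph{Proof plan.}
The plan is to argue by a double induction: an outer induction on the spread $s=s(M)$, and for fixed $s$ an inner induction on the total degree $|\bf n|$. The inner base case $|\bf n|=\ell$ is exactly the hypothesis, and the outer base case $s=0$ is handled separately: then $\Supp_{++}(M/\fkm M)=\emptyset$ forces $R_{++}\subseteq\sqrt{\Ann_R(M/\fkm M)}$, so graded Nakayama together with the $\bf a$-invariant bound yields $[M]_{\bf n}=0$ throughout the asserted range, making the claim for $M/N$ trivial. For the outer inductive step $s\geq 1$, take $\bf n\geq\bf a(M)+\bf e$ with $|\bf n|=t+1\geq\ell+1$; rewriting the numerical hypothesis gives
$$\sum_{j=1}^{r}\bigl(n_j-a^j(M)-1\bigr)=|\bf n|-|\bf a(M)|-r\geq s\geq 1,$$
so at least one index $i$ satisfies $n_i\geq a^i(M)+2$, whence $\bf n-\bf e_i$ still lies in the stable region $\{\bf n'\geq\bf a(M)+\bf e\}$.

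After a faithfully flat base change of $A$ to arrange an infinite residue field, I would choose a generic $M$-regular element $x\in R_{\bf e_i}$. The short exact sequence
$$0\to M(-\bf e_i)\xrightarrow{\,x\,}M\to M/xM\to 0$$
and its local-cohomology long exact sequence together make $M/xM$ Cohen--Macaulay of dimension $\dim M-1$ with $\bf a(M/xM)=\bf a(M)+\bf e_i$ and $s(M/xM)\leq s-1$; hence $|\bf a(M/xM)|+s(M/xM)+r-1\leq|\bf a(M)|+s+r-1\leq\ell$, so $\ell$ remains an admissible cut-off for the theorem applied to $M/xM$. Set $\bar N:=(N+xM)/xM\subseteq M/xM$. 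The required hypothesis of the theorem on $M/xM$ at cut-off $\ell$ is that $[(M/xM)/\bar N]_{\bf n'}=[M/(N+xM)]_{\bf n'}=0$ for $\bf n'\geq\bf a(M/xM)+\bf e=\bf a(M)+\bf e_i+\bf e$ with $|\bf n'|=\ell$, and this follows because every such $\bf n'$ satisfies $\bf n'\geq\bf a(M)+\bf e$, so $[M/N]_{\bf n'}=0$ by the original hypothesis. The outer induction hypothesis thus delivers $[M/(N+xM)]_{\bf n}=0$, i.e.\ $[M]_{\bf n}=[N]_{\bf n}+x[M]_{\bf n-\bf e_i}$. Combined with the inner induction hypothesis $[M]_{\bf n-\bf e_i}=[N]_{\bf n-\bf e_i}$, this gives $x[M]_{\bf n-\bf e_i}\subseteq[N]_{\bf n}$ and hence $[M]_{\bf n}\subseteq[N]_{\bf n}$, completing the step.

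The principal technical difficulties I anticipate are (i) producing a genuine $M$-regular element in $R_{\bf e_i}$, which generally requires enlarging the residue field by a faithfully flat base change of $A$ and then descending the vanishing of the relevant graded components; and (ii) verifying the transformation formulas $\bf a(M/xM)=\bf a(M)+\bf e_i$ and, more delicately, $s(M/xM)\leq s(M)-1$. The drop in spread is the hardest point, since it demands that $x$ avoid the primes of maximal dimension in $\Supp_{++}(M/\fkm M)$, a genericity condition on $R_{\bf e_i}/\fkm R_{\bf e_i}$ which is fulfilled after the residue field is enlarged.
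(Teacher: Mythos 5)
Your double-induction scheme (outer on the spread $s(M)$, inner on $|\bf n|$, killing one element of degree $\bf e_i$ at each outer step) is genuinely different from the paper's argument, which is a single induction on $|\bf n|$ that mods out by an entire joint reduction of type $\bf q$, $|\bf q|=s(M)$, in one shot, using the filter-regular machinery of Sections 2 and 3.  However, your version has a gap that your own parenthetical worry (i) understates: the required $M$-regular element in $R_{\bf e_i}$ need not exist at all, even after a faithfully flat base change making the residue field infinite.  The obstruction is that $\Ass_R(M)$ may contain a prime $P$ with $R_{\bf e_i}\subseteq P$, and Cohen--Macaulayness does not exclude this, since such a $P$ may well be a minimal prime of the correct dimension.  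A concrete instance: take $A=k$ a field, $r=2$, $R=k[x,y,z]$ with $\deg x=\deg y=\bf e_1$, $\deg z=\bf e_2$, and $M=R/(xz)$.  Then $M$ is Cohen--Macaulay of dimension $2$ with $\Ass_R(M)=\{(x),(z)\}$, and one checks $s(M)=1$ and $\bf a(M)=(-1,0)$, so your argument at $\bf n=(0,2)$ is forced to take $i=2$; but $R_{\bf e_2}=kz\subseteq(z)\in\Ass_R(M)$, so every element of $R_{\bf e_2}$ is a zerodivisor on $M$.  Enlarging $k$ does not help, because $(z)$ remains associated.  The element $z$ \emph{is} $M$-filter-regular with respect to $R_{++}=(xz,yz)$, since $(z)\supseteq R_{++}$ while $(x)\not\supseteq R_{++}$; this is precisely the reason the paper develops filter-regular sequences, the $R_{++}$-local cohomology, and Proposition~\ref{filvanish} instead of working with genuine regular sequences and $\fkM$-local cohomology.

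A secondary problem, independent of the first: for a regular $x\in R_{\bf e_i}$ one only has $\bf a(M/xM)\leq\bf a(M)+\bf e_i$ componentwise, not equality.  Where the inequality is strict in some component $k$, the region $\{\bf n'\geq\bf a(M/xM)+\bf e\}$ to which you must apply the theorem for $M/xM$ strictly contains $\{\bf n'\geq\bf a(M)+\bf e_i+\bf e\}$, and for the extra $\bf n'$ the original vanishing hypothesis on $M/N$ gives you nothing, so your ``this follows because every such $\bf n'$ satisfies $\bf n'\geq\bf a(M)+\bf e$'' claim breaks.  Both issues are avoided in the paper by replacing $\bf a(M)+\bf e$ with a free parameter $\bf b$ in Theorem~\ref{general} and tracking only the vanishing of $[H^i_{R_{++}}(M)]_{\bf n}$ for $\bf n\geq\bf b$, which is stable under modding out by a filter-regular sequence (Proposition~\ref{filvanish}) without ever needing to control the exact new $\bf a$-invariant or produce a genuine regular element.
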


The most interesting case is the case where $R$ is the multi-Rees algebra of ideals in $A$ and $M$ is its normalization. 
Let $I_1, \dots , I_r$ be ideals in $A$. We set 
$$\Rees(\bf I)=A[I_1t_1, \dots , I_rt_r]$$
the multi-Rees algebra of the ideals $I_1, \dots , I_r$ and
$$\bar \Rees(\bf I)=\sum_{\bf n \in \mathbb N^r} \bar{\bf I^{\bf n}} \bf t^{\bf n}$$ 
the multi-Rees algebra of its integral closure filtration, 
where $t_1, \dots , t_r$ are indeterminates. 
By applying Theorem \ref{main} to this special case, we have the following. 

\setcounter{section}{5}
\setcounter{Theorem}{5}

\begin{Theorem}\label{multiRees}
Let $(A, \fkm)$ be a Noetherian local ring of $\dim A=d>0$. 
Let $I_1, \dots , I_r$ be ideals in $A$ such that $\dim \Rees(\bf I)=d+r$. Suppose that
$\Rees(\bf I) \subseteq \bar \Rees(\bf I)$ is module-finite and $\bar \Rees(\bf I)$ is Cohen-Macaulay. 
Let $\ell \geq \lambda(I_1 \cdots I_r)-1$ and assume that 
$$\bf I^{\bf n} \ \mbox{is integrally closed for all} \ \bf n \in \mathbb N^r \ \mbox{with} \ |\bf n| = \ell. $$ 
Then we have that 
$$\bf I^{\bf n} \ \mbox{is integrally closed for all} \ \bf n \in \mathbb N^r \ \mbox{with} \ |\bf n| \geq \ell. $$ 

In particular, if 
$$\bf I^{\bf n} \ \mbox{is integrally closed for any} \ \bf n \in \mathbb N^r \ \mbox{with} \ 0 \leq |\bf n| \leq \lambda(I_1 \cdots I_r)-1, $$
then 
all the power products $\bf I^{\bf n}$ of $I_1, \dots , I_r$ are integrally closed.   
\end{Theorem}

This is related to the classic Zariski's theory on integrally closed ideals in a regular local ring of dimension two. 
Let $I$ and $J$ be integrally closed ideals in a two-dimensional regular local ring $A$. 
Then Zariski proved in \cite[Appendix 5]{ZS} that the product $IJ$ is also integrally closed. 
Lipman and Teissier \cite{LT} 
proved that the reduction number of integrally closed ideals 
is always at most one. 
Huneke and Sally \cite{HS} proved that the Rees algebra of integrally closed ideals is always Cohen-Macaulay. 
In this sense, Theorem \ref{multiRees} can be viewed as an analogue of the classic theorem of 
Zariski in higher dimensional regular local rings.

Let $I_1, \dots , I_r$ be monomial ideals in a polynomial ring $S=k[X_1, \dots , X_d]$ over a field $k$. 
We consider 
the multi-Rees algebra $\Rees=\Rees(\bf I)$ of the ideals $I_1, \dots , I_r$ and its normalization $\bar \Rees=\bar \Rees(\bf I)$.  
Then the ring extension $\Rees \subset \bar \Rees$ is module-finite because $\Rees$ is a finitely generated algebra over a field and an integral 
domain. Also, $\bar \Rees$ is a normal semigroup ring so that it is Cohen-Macaulay by Hochster's Theorem. 
Thus, by localizing at the homogeneous maximal ideal 
$(X_1, \dots , X_d)$ in $S$, we have the following as a direct consequence of Theorem \ref{multiRees}. 

\setcounter{section}{5}
\setcounter{Theorem}{7}
\begin{Corollary}\label{application}
Let $S=k[X_1, \dots , X_d]$ be a polynomial ring over a field $k$ and 
let $I_1, \dots , I_r$ be arbitrary monomial ideals in $S$. Suppose that 
$$\bf I^{\bf n} \ \mbox{is integrally closed for any} \ \bf n \in \mathbb N^r \ \mbox{with} \ 0 \leq |\bf n| \leq \lambda(I_1 \cdots I_r)-1. $$ 
Then all the power products $\bf I^{\bf n}$ of $I_1, \dots , I_r$ are integrally closed. 
\end{Corollary}

This can be viewed as a common generalization of the original result of  Reid, Roberts and Vitulli  \cite{RRV} and the results of Singla \cite{S}, Sarkar and Verma
\cite{SV}. 
Indeed, if we take $r=1$, then we can readily get the results in \cite{RRV, S}. 
The result \cite{SV} is a special case of Corollary \ref{application} 
where $I_1, \dots , I_r$ are $(X_1, \dots , X_d)$-primary monomial ideals.

Let me explain the construction of this article. 
Sections 2 and 3 are preliminaries. 
In section 2, 
we will recall the notion of filter-regular sequences of multigraded modules and its basic properties, 
in particular, a connection with the vanishing of homogeneous components of the local cohomology modules. 
In section 3, we will recall some notion associated to multigraded modules; in particular, 
the spread, complete and joint reductions. 
We will prove the key result, Theorem \ref{compred}, on the existence of certain complete and joint reductions. 
This can be viewed as a common generalization 
to both of the results \cite{KR} about the existence of complete and joint reductions and 
\cite{Tr} about the existence of filter-regular sequences. 
Our proof yields considerable simplifications of the original one in \cite{KR, R}. 
In section 4, we will prove Theorem \ref{main}. In fact, we will prove a more general result, Theorem \ref{general}, and obtain Theorem \ref{main} as 
a direct consequence.  
In section 5, we will give some applications of Theorems \ref{main} and \ref{general}. We will apply our results to multi-Rees algebras of ideals. 
Theorem \ref{multiRees} and Corollary \ref{application} stated above will be proved in this section. 
To do this, we will give some fundamental results on the spread and the $\bf a$-invariant vector of multi-Rees algebras of ideals. 

Throughout this article, $r>0$ is a fixed positive integer. Let $\mathbb N$ be the set of natural numbers with zero. 
For a finite set $\sigma$, ${}^{\sharp} \sigma$ denotes the number of elements of $\sigma$. 
Vectors in $\mathbb Z^r$
will be written by bold-faced letters, e.g., $\bf a, \bf b, \bf n, \bf m, \cdots$. For a vector 
$\bf n=(n_1, \dots , n_r) \in \mathbb Z^r$, 
we set $|\bf n|=n_1+\dots +n_r$ the sum of all entries of $\bf n$. 
We will use the partial order on $\mathbb Z^r$ as follows: 
$$\bf n \geq \bf m \stackrel{\rm def}{\Leftrightarrow}
n_i \geq m_i \ \mbox{for all} \ i=1, \dots , r. $$
Let $\bf 0=(0, \dots , 0)$ be the zero vector and $\bf n \gg \bf 0$ means that 
each entry $n_i$ of $\bf n$ is large enough, i.e., $n_i \gg 0$ for all 
$i=1, \dots , r$.  
Let $\bf e_i=(0, \dots , \overset{i}{\check{1}}, \dots , 0)$ be the $i$th standard base of $\mathbb Z^r$ and set 
$\bf e=\bf e_1+\dots +\bf e_r=(1, \dots , 1)$. For ideals $I_1, \dots , I_r$, we write $\bf I^{\bf n}=I_1^{n_1} \cdots I_r^{n_r}$ for short.
Similarly, we write $\bf t^{\bf n}=t_1^{n_1} \cdots t_r^{n_r}$ for indeterminates $t_1, \dots  ,t_r$.

\setcounter{section}{1}

\section{Filter-regular sequences and local cohomology}

Let $R$ be a Noetherian standard $\mathbb N^r$-graded ring with $R_{\bf 0}=A$ and 
let $M$ be a finitely generated $\mathbb Z^r$-graded $R$-module. 
We will recall the notion of $M$-filter-regular sequences and collect some basic properties. 
All of the results in this section are well-known. 
We will give a proof of some of them for the reader's convenience. 
See \cite{BS, HHR, Hy, SV, Tr} for more details. 

\begin{Definition}
{ \ }
\begin{enumerate}
\item A homogeneous element $a \in R$ is called $M$-filter-regular $($with respect to $R_{++})$, if 
$$a \notin P \ \mbox{for any} \ P \in \Ass_R (M) \setminus {\rm V}(R_{++})$$
where ${\rm V}(R_{++})$ is the set of prime ideals of $R$ which contains $R_{++}$.  
\item A sequence $a_1, \dots , a_s \in R$ of homogeneous elements is called 
an $M$-filter-regular sequence $($with respect to $R_{++})$, if 
$$a_i \  \mbox{is} \ (M/(a_1, \dots , a_{i-1})M)\mbox{-filter-regular for all} \  i=1, \dots , s, $$ 
where we set $(a_1, \dots , a_{i-1})M=(0)$ when $i=1$. 
\end{enumerate}
\end{Definition}

Note that we do not require the set $\Ass_R (M) \setminus {\rm V}(R_{++}) \neq \emptyset$ for a homogeneous element $a \in R$ to be 
$M$-filter-regular. Thus, when
$\Ass_R (M) \setminus {\rm V}(R_{++})=\emptyset$, any homogeneous element of $R$ is $M$-filter-regular by definition. 

$M$-filter-regular sequences behave well as $M$-sequences. 

\begin{Lemma}\label{filreg}
Let $a_1, \dots , a_s \in R$ be an $M$-filter-regular sequence. 
Then for any $i=1, \dots , s$, the equality 
$$[(a_1, \dots , a_{i-1})M:_M a_i]_{\bf n}=[(a_1, \dots , a_{i-1})M]_{\bf n}$$
holds true for all $\bf n \gg \bf 0$.
\end{Lemma}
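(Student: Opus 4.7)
The plan is to reduce to the case $i=1$ and then combine finite generation with the standard $\mathbb N^r$-grading hypothesis. For the reduction, set $\bar M := M/(a_1,\dots,a_{i-1})M$, which is again a finitely generated $\mathbb Z^r$-graded $R$-module. From the canonical isomorphism
$$\frac{(a_1,\dots,a_{i-1})M :_M a_i}{(a_1,\dots,a_{i-1})M} \;\cong\; \bigl(0 :_{\bar M} a_i\bigr),$$
the desired equality of homogeneous components is equivalent to $[0 :_{\bar M} a_i]_{\bf n} = (0)$ for $\bf n \gg \bf 0$, and by the sequential definition of filter-regularity, $a_i$ is $\bar M$-filter-regular. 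It therefore suffices to prove the following single-element statement: if $a \in R$ is a homogeneous $M$-filter-regular element, then $K := (0 :_M a)$ has $K_{\bf n} = (0)$ for all $\bf n \gg \bf 0$.

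For the first half of this statement, let $P \in \Ass_R(K)$. Since $K$ is an $R$-submodule of $M$ we have $P \in \Ass_R(M)$, and since $aK = 0$ we have $a \in P$; filter-regularity of $a$ then forces $P \in {\rm V}(R_{++})$. Thus every associated prime of the finitely generated $R$-module $K$ contains $R_{++}$, so $R_{++}^k K = (0)$ for some integer $k \geq 1$.

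It remains to pass from this uniform annihilation to vanishing in every sufficiently large multidegree. Choose homogeneous generators $x_1, \dots, x_m$ of $K$ of degrees $\bf d_1, \dots, \bf d_m$, so that $K_{\bf n} = \sum_{j} R_{\bf n - \bf d_j}\, x_j$. A short verification using the hypothesis $R = A[R_{\bf e_1}, \dots, R_{\bf e_r}]$ gives $R_{\bf m} \subseteq R_{++}^k$ whenever $\bf m \geq k\bf e$: any generating product of degree $\bf m$ contains at least $k$ factors from each $R_{\bf e_i}$, which can be regrouped into $k$ subproducts, each lying in $R_{\bf e_1} \cdots R_{\bf e_r} \subseteq R_{\bf e}$, so the whole product lies in $R_{\bf e}^k R \subseteq R_{++}^k$. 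Consequently, as soon as $\bf n - \bf d_j \geq k\bf e$ for every $j$, each summand $R_{\bf n - \bf d_j}\, x_j$ lies in $R_{++}^k K = (0)$, giving $K_{\bf n} = (0)$ as required.

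The step I expect to be the main obstacle is the last one. For the classical $r=1$ case, $R_{++} = R_+$ is generated in degree one, so $R_{++}^k = R_{\geq k}$ and the conclusion is immediate. When $r > 1$, however, $R_{++}$ is strictly smaller than $R_+$, and some genuine use of standardness is needed to ensure that $R_{++}^k$ intersects $R_{\bf m}$ nontrivially for every componentwise-large $\bf m$; the cleanest way to do this is the grouping argument above. Everything else reduces to routine associated-prime bookkeeping.
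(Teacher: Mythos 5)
Your proof is correct and follows essentially the same route as the paper: reduce to the single-element case $i=1$, show $\Ass_R(0:_M a)\subseteq \mathrm{V}(R_{++})$, deduce $R_{++}^k(0:_M a)=(0)$, and conclude from finite generation. The only difference is that you make the final step explicit (verifying $R_{\bf m}\subseteq R_{++}^k$ for $\bf m\geq k\bf e$ via the regrouping argument), whereas the paper leaves this as a one-line assertion; your version is a welcome elaboration, not a different method.
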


\begin{proof}
It is enough to show that for an $M$-filter-regular element $a \in R$, 
$$[0:_M a]_{\bf n}=(0) \ \mbox{for all} \ \bf n \gg \bf 0, $$
because 
$$\left[ 0:_{M/(a_1, \dots , a_{i-1})M} a_i \right]_{\bf n} =[(a_1, \dots , a_{i-1})M:_M a_i]_{\bf n}/[(a_1, \dots , a_{i-1})M]_{\bf n}$$ 
for any 
$i=1, \dots , s$. Let $a \in R$ be an $M$-filter-regular element. Then 
$$\Ass_R(0:_M a) \subseteq {\rm V}(R_{++}). $$ 
Indeed, 
the case $\Ass_R(M) \setminus {\rm V}(R_{++})=\emptyset$ is clear. Suppose $\Ass_R(M) \setminus {\rm V}(R_{++}) \neq \emptyset$. 
Then $a \notin P$ for all $P \in \Ass_R (M) \setminus {\rm V}(R_{++})$. 
Hence, $(0:_M a)_P =(0)$ for all $P \in \Ass_R (M) \setminus {\rm V}(R_{++})$. 
Thus, $\Ass_R( 0:_M a) \subseteq {\rm V}(R_{++})$. 
Hence, $\sqrt{\Ann_R(0:_M a)} \supseteq R_{++} $ so that 
$$R_{++}^n [0:_M a]=(0)$$ 
for all $n \gg 0$. 
Since $[0:_M a]$ is finitely generated, we get $[0:_M a]_{\bf n}=(0)$ for all $\bf{n} \gg \bf 0$. 
\end{proof}

\begin{Remark}
{\rm 
\quad
\begin{enumerate}
\item If $a \in R$ is $M$-filter-regular, then $[0:_M a]$ is $R_{++}$-torsion. 
\item The converse of Lemma \ref{filreg} also holds true. That is, 
the equalities 
$$[(a_1, \dots , a_{i-1})M:_M a_i]_{\bf n}=[(a_1, \dots , a_{i-1})M]_{\bf n} \ \mbox{for all} \ \bf n \gg \bf 0$$ 
and any $i=1, \dots , s$ imply that the sequence $a_1, \dots , a_s$ is an $M$-filter-regular sequence. 
\end{enumerate}}
\end{Remark}

Lemma \ref{filreg} implies the following useful exact sequence of multigraded local cohomology modules 
with respect to the irrelevant ideal $R_{++}$. 

\begin{Lemma}\label{filex}
Let $a \in R_{\bf m}$ be an $M$-filter-regular element of degree $\bf m \in \mathbb N^r$. Then for any $i \geq 0$ and any $\bf n \in \mathbb Z^r$, 
there exists the following exact sequence of $A$-modules: 
$$[H^i_{R_{++}}(M)]_{\bf n} \to [H^i_{R_{++}}(M/aM)]_{\bf n} \to [H^{i+1}_{R_{++}}(M)]_{\bf n - \bf m}. $$
\end{Lemma}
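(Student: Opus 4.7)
The plan is to apply local cohomology to the factorization of the multiplication-by-$a$ map $M(-\bf m) \to M$, which is a graded map of degree $\bf 0$ with kernel $K(-\bf m)$ (where $K := [0 :_M a]$) and cokernel $M/aM$. Splitting at its image $aM \subseteq M$ yields two short exact sequences of graded $R$-modules:
\begin{align*}
0 &\to K(-\bf m) \to M(-\bf m) \to aM \to 0, \\
0 &\to aM \to M \to M/aM \to 0.
\end{align*}

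The essential input is that $K$ is $R_{++}$-torsion. Indeed, filter-regularity of $a$ yields $\Ass_R(K) \subseteq {\rm V}(R_{++})$ exactly as in the proof of Lemma~\ref{filreg}, so $R_{++}$ lies in $\sqrt{\Ann_R K}$, and therefore $H^j_{R_{++}}(K(-\bf m)) = 0$ for every $j \geq 1$.

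Applying $H^{\bullet}_{R_{++}}$ to the first short exact sequence, this vanishing collapses the long exact sequence into graded isomorphisms $H^{j}_{R_{++}}(M(-\bf m)) \xrightarrow{\sim} H^{j}_{R_{++}}(aM)$ for every $j \geq 1$. Using the shift identity $[H^{j}_{R_{++}}(N(-\bf m))]_{\bf n} = [H^{j}_{R_{++}}(N)]_{\bf n - \bf m}$ and setting $j = i+1$, these become
$$[H^{i+1}_{R_{++}}(aM)]_{\bf n} \cong [H^{i+1}_{R_{++}}(M)]_{\bf n - \bf m} \qquad (i \geq 0).$$
Applying $H^{\bullet}_{R_{++}}$ to the second short exact sequence then gives, for every $i \geq 0$, the exact three-term piece
$$[H^i_{R_{++}}(M)]_{\bf n} \to [H^i_{R_{++}}(M/aM)]_{\bf n} \to [H^{i+1}_{R_{++}}(aM)]_{\bf n},$$
and substituting the displayed isomorphism into the last term produces the claim.

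I do not anticipate any genuine obstacle; the result is a standard consequence of this two-step factorization together with the torsion vanishing. The only routine verification is that the composite $[H^i_{R_{++}}(M/aM)]_{\bf n} \to [H^{i+1}_{R_{++}}(aM)]_{\bf n} \xrightarrow{\sim} [H^{i+1}_{R_{++}}(M)]_{\bf n - \bf m}$ coincides with the connecting homomorphism coming from the four-term exact sequence $0 \to K(-\bf m) \to M(-\bf m) \to M \to M/aM \to 0$, which follows by a short diagram chase.
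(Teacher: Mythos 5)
Your proof is correct and takes essentially the same approach as the paper: both factor the degree-$\bf m$ multiplication map through its image (the paper writes this via $M/[0:_M a]$, you via $aM$, which are the same module up to a shift), use filter-regularity to see that $[0:_M a]$ is $R_{++}$-torsion and hence invisible to $H^j_{R_{++}}$ for $j\geq 1$, and then splice the two long exact sequences. The remark at the end about identifying the composite with a connecting map is unnecessary for the statement as given, since only the existence of some exact sequence with the displayed terms is claimed.
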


\begin{proof}
Let $ a \in R_{\bf m}$ be an $M$-filter regular element. Consider the exact sequence 
$$0 \to [0:_M a] \to M \to M/[0:_M a] \to 0 $$
of graded $R$-modules.  
Since $[0:_M a]$ is $R_{++}$-torsion, $H^i_{R_{++}}(0:_M a)=(0)$ for all $i \geq 1$ by \cite[Corollary 2.1.7]{BS}. 
Hence, 
$$H^i_{R_{++}}(M) \cong H^i_{R_{++}}(M/[0:_M a]) \ \mbox{for all} \ i \geq 1$$
as graded $R$-modules. 
On the other hand,  
the short exact sequence
$$0 \to \left( M/[0:_M a] \right) (-\bf m) \stackrel{\cdot a}{\to} M \to M/aM \to 0 $$
of graded $R$-modules implies the following exact sequence  
$$H^i_{R_{++}}(M) \to H^i_{R_{++}}(M/aM) \to H^{i+1}_{R_{++}}(M/[0:_M a]) (-\bf m) $$
for any $i \geq 0$. By taking degree $\bf n$ part, we get the desired exact sequence. 
\end{proof}

By using the exact sequence in Lemma \ref{filex}, we have the following. 

\begin{Proposition}\label{filvanish}
Let $a_1, \dots , a_s \in R$ be an $M$-filter-regular sequence with $a_i \in R_{\bf m_i}$ for $i=1, \dots , s$. 
Let $\bf b \in \mathbb Z^r$ and assume that 
$$[H^i_{R_{++}}(M)]_{\bf n}=(0)$$ 
for all $i \geq 0$ and all $\bf n \geq \bf b$. Then we have that 
$$[H^i_{R_{++}}(M/(a_1, \dots , a_s)M)]_{\bf n}=(0)$$
for all $i \geq 0$ and all $\bf n \geq \bf b + \bf m_1+\cdots +\bf m_s$. 
\end{Proposition}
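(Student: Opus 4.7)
The plan is to argue by induction on the length $s$ of the filter-regular sequence, with Lemma \ref{filex} doing all the work.

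For the base case $s=1$, I would apply Lemma \ref{filex} to the $M$-filter-regular element $a_1 \in R_{\bf m_1}$. For any $i \geq 0$ and any $\bf n \geq \bf b + \bf m_1$, the exact sequence
$$[H^i_{R_{++}}(M)]_{\bf n} \to [H^i_{R_{++}}(M/a_1M)]_{\bf n} \to [H^{i+1}_{R_{++}}(M)]_{\bf n - \bf m_1}$$
has vanishing outer terms: the first because $\bf n \geq \bf b + \bf m_1 \geq \bf b$, and the third because $\bf n - \bf m_1 \geq \bf b$. Hence the middle term vanishes, as required.

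For the inductive step, set $M' = M/a_1 M$. By definition of filter-regular sequence, $a_2, \dots, a_s$ is an $M'$-filter-regular sequence. By the base case applied to $a_1$, the vanishing hypothesis transfers to $M'$: $[H^i_{R_{++}}(M')]_{\bf n} = (0)$ for all $i \geq 0$ and all $\bf n \geq \bf b + \bf m_1$. Now apply the inductive hypothesis to $M'$ with the shifted bound $\bf b + \bf m_1$ in place of $\bf b$, obtaining
$$[H^i_{R_{++}}(M'/(a_2, \dots , a_s)M')]_{\bf n} = (0)$$
for all $i \geq 0$ and all $\bf n \geq \bf b + \bf m_1 + \bf m_2 + \cdots + \bf m_s$. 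Since $M'/(a_2, \dots, a_s)M' = M/(a_1, \dots, a_s)M$, this is exactly the conclusion.

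This is a routine induction; there is no real obstacle, since Lemma \ref{filex} has already packaged the essential computation (the controlled shift $-\bf m$ appearing in the connecting map) and Lemma \ref{filreg} guarantees that taking successive quotients by a filter-regular sequence is compatible with this cohomological bookkeeping. The only care needed is the direction of the inequalities: one must check that both $\bf n$ and $\bf n - \bf m_1$ lie in the region where the hypothesis gives vanishing, which forces the shift $\bf m_1 + \cdots + \bf m_s$ in the conclusion.
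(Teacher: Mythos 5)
Your proof is correct and is essentially identical to the paper's: both argue by induction on $s$, use Lemma~\ref{filex} in the base case to kill both outer terms of the exact sequence once $\mathbf n \geq \mathbf b + \mathbf m_1$, and then apply the inductive hypothesis to $M/a_1M$ with the shifted bound $\mathbf b + \mathbf m_1$. There is no substantive difference in approach or bookkeeping.
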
 

\begin{proof}
We use induction on $s$. 
By Lemma \ref{filex}, for any $i \geq 0$ and any $\bf n \in \mathbb Z^r$, there exists the exact sequence
$$[H^i_{R_{++}}(M)]_{\bf n} \to [H^i_{R_{++}}(M/a_1M)]_{\bf n} \to [H^{i+1}_{R_{++}}(M)]_{\bf n-\bf m_1}. $$
By assumption, 
$$[H^i_{R_{++}}(M)]_{\bf n}=[H^{i+1}_{R_{++}}(M)]_{\bf n-\bf m_1}=(0)$$ 
for all $i \geq 0$ and all $\bf n \geq \bf b +\bf m_1$. Hence, 
$$[H^i_{R_{++}}(M/a_1M)]_{\bf n}=(0)$$
for all $i \geq 0$ and all $\bf n \geq \bf b +\bf m_1$. 
Thus, we get the case $s=1$.  
Suppose $s \geq 2$. Then $[H^i_{R_{++}}(M/a_1M)]_{\bf n}=(0)$ for all $i \geq 0$ and all $\bf n \geq \bf b+\bf m_1$. 
Since $a_2, \dots , a_s$ is an $(M/a_1M)$-filter-regular sequence,   
\begin{eqnarray*}
\left[H^i_{R_{++}} \left( M/(a_1, a_2, \dots , a_s)M \right) \right]_{\bf n} &=& 
\left[H^i_{R_{++}} \left( (M/a_1M)/(a_2, \dots , a_s)(M/a_1M) \right) \right]_{\bf n} \\
&=& (0)
\end{eqnarray*}
for all $i \geq 0$ and all $\bf n \geq (\bf b+\bf m_1)+\bf m_2+\dots +\bf m_s$ by induction hypothesis. 
Thus, we get the desired vanishing. 
\end{proof}

Let $\varphi: \mathbb Z^r \to \mathbb Z^q$ be a group homomorphism satisfying $\varphi (\mathbb N^r) \subseteq \mathbb N^q$. 
We set 
$$R^{\varphi}=\bigoplus_{\bf m \in  \mathbb N^q} \left( \bigoplus_{\varphi( \bf n )=\bf m} R_{\bf n} \right).  $$
For any $\mathbb Z^r$-graded module $L$, we set 
$$L^{\varphi}=\bigoplus_{\bf m \in \mathbb Z^q} \left( \bigoplus_{\varphi( \bf n )=\bf m} L_{\bf n} \right).  $$
Then $R^{\varphi}$ is an $\mathbb N^q$-graded ring and $L^{\varphi}$ is a $\mathbb Z^q$-graded $R^{\varphi}$-module. 
The local cohomology modules are compatible with a change of grading. 

\begin{Lemma}\label{changeofgrading}\cite[Lemma 1.1]{HHR}
Let $\varphi: \mathbb Z^r \to \mathbb Z^q$ be a group homomorphism satisfying $\varphi (\mathbb N^r) \subseteq \mathbb N^q$. 
Then we have that for any $i$, 
$$\left( H_{\fkM}^i(M) \right)^{\varphi} = H_{\fkM^{\varphi}}^i(M^{\varphi}) $$
where $\fkM = \frak m R+R_{+}$ is the homogeneous maximal ideal of $R$. 
\end{Lemma}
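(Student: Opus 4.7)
The plan is to prove both sides of the claimed isomorphism by computing local cohomology via the \v{C}ech complex built from a common finite set of homogeneous generators of $\fkM$, and then to observe that this \v{C}ech complex is compatible with the regrading operation $(-)^\varphi$. The essential observation is that $R$ and $R^\varphi$ share the same underlying ring and $\fkM$ and $\fkM^\varphi$ share the same underlying ideal; only the gradings differ.

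First, I would fix a finite system $x_1,\dots,x_s$ of homogeneous generators of $\fkM$ in $R$---concretely, a generating set for $\fkm\subseteq A$ together with homogeneous generators for each $R_{\bf e_i}$. If $x_j$ has $\mathbb Z^r$-degree $\bf d_j$ in $R$, then since $\varphi(\mathbb N^r)\subseteq \mathbb N^q$, the same element $x_j$ is homogeneous of $\mathbb Z^q$-degree $\varphi(\bf d_j)$ in $R^\varphi$. The same elements therefore generate $\fkM^\varphi$ as an ideal of $R^\varphi$.

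Next, I would invoke the standard fact that $H^i_\fkM(M)$ is computed by the \v{C}ech complex $\check C^{\bullet}(x_1,\dots,x_s;M)$ whose terms are direct sums of modules of the form $M_{x_{j_1}\cdots x_{j_k}}$ with differentials built from natural localization maps. The same \v{C}ech complex, viewed as a complex of $\mathbb Z^q$-graded $R^\varphi$-modules, computes $H^i_{\fkM^\varphi}(M^\varphi)$. The key compatibility is that localization at a homogeneous element commutes with $\varphi$-regrading: $(M_{x_j})^\varphi \cong (M^\varphi)_{x_j}$ as $\mathbb Z^q$-graded $R^\varphi$-modules, because both sides consist of fractions $m/x_j^k$ with $m\in M$ homogeneous, and the $\mathbb Z^q$-degree of such a fraction in either case is $\varphi(\deg m)-k\varphi(\bf d_j)$. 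Applying $(-)^\varphi$ termwise to $\check C^\bullet(x_1,\dots,x_s;M)$ therefore yields $\check C^\bullet(x_1,\dots,x_s;M^\varphi)$ on the nose.

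Finally, the functor $(-)^\varphi$ is exact---it merely reindexes the direct-sum decomposition of the underlying abelian group by the fibers of $\varphi$---so it commutes with cohomology, which gives the claimed isomorphism. I do not anticipate a serious obstacle; the one point requiring care is the verification of the previous step, the compatibility of homogeneous localization with $\varphi$-regrading, but this reduces to a direct check on the fraction representation of $M_{x_j}$.
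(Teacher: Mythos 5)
Your proof is correct. The paper does not prove this lemma itself but cites it from Herrmann--Hyry--Ribbe \cite[Lemma 1.1]{HHR}; the \v{C}ech-complex argument you give --- exploiting that $(-)^\varphi$ is exact, that it commutes with localization at a homogeneous element, and that $\fkM$ and $\fkM^\varphi$ (likewise $R$ and $R^\varphi$, $M$ and $M^\varphi$) coincide as ungraded objects so the same generating set serves both --- is a correct, self-contained proof of the cited fact and is the natural route to it.
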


We need one more property on the local cohomology modules. 
Let $I, J$ be subsets of $\{1, \dots , r\}$ such that $J \neq \emptyset$ and $I \cap J =\emptyset$. Then we set
$$\fkM_{I, J}:=\left[ \bigcap_{i \in I} R_{\bf e_i}R \right] \cap \left[ \sum_{j \in J} R_{\bf e_j}R \right]. $$
Note that 
$\fkM_{\{1, \dots , r-1\}, \{r\}}=R_{++}$. With this notation, 
Sarkar and Verma proved the following, 
which can be viewed as a multigraded version of \cite[Lemma 2.3]{Hy}, 
by using the method of \cite[Theorem 3.2.6]{TaiHa}.

\begin{Proposition}\label{SV}\cite[Proposition 3.2]{SV}
Let $\bf a=(a_1, \dots , a_r) \in \mathbb Z^r$. Suppose that 
$$[H^i_{\frak M}(M)]_{\bf n}=(0)$$
for all $i \geq 0$ and 
all $\bf n \in \mathbb Z^r$ with $n_k > a_k$ for some $k=1, \dots , r$. Then 
for any subsets $I, J \subseteq \{1, \dots , r\}$ such that $J \neq \emptyset$ and $I \cap J =\emptyset$, 
we have that 
$$[H^i_{\fkM_{I, J}}(M)]_{\bf n}=(0)$$
for all $i \geq 0$ and all $\bf n \geq \bf a + \bf e$. 
\end{Proposition}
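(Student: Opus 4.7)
The plan is to proceed by induction on $|J|$, with the Mayer-Vietoris long exact sequence for local cohomology as the central tool, following the method of \cite[Theorem~3.2.6]{TaiHa}. The starting point is the pair of combinatorial identities
\[
\fkM_{I,J} = \fkM_{I,J'} + \fkM_{I,\{j_0\}}, \qquad \fkM_{I,J'} \cap \fkM_{I,\{j_0\}} = \fkM_{I \cup \{j_0\}, J'},
\]
valid for any $j_0 \in J$ and $J' = J \setminus \{j_0\} \neq \emptyset$. Both are verified componentwise: because $R$ is standard $\mathbb N^r$-graded, the component $[\fkM_{I,J}]_\bf n$ equals $R_\bf n$ precisely when $n_i \geq 1$ for every $i \in I$ and $n_j \geq 1$ for some $j \in J$, and is zero otherwise, and the two identities fall out by inspecting the defining logical conditions.

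For the inductive step ($|J| \geq 2$), the Mayer-Vietoris sequence
\[
\cdots \to H^{i-1}_{\fkM_{I \cup \{j_0\}, J'}}(M) \to H^i_{\fkM_{I,J}}(M) \to H^i_{\fkM_{I,J'}}(M) \oplus H^i_{\fkM_{I,\{j_0\}}}(M) \to H^i_{\fkM_{I \cup \{j_0\}, J'}}(M) \to \cdots
\]
combined with the induction hypothesis applied to the three triples $(I,J'),(I,\{j_0\}),(I\cup\{j_0\}, J')$, each of which has strictly smaller $|J|$, yields the desired vanishing of $[H^i_{\fkM_{I,J}}(M)]_\bf n$ for all $\bf n \geq \bf a + \bf e$. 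This reduces the proof to the base case $|J| = 1$.

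For the base case, set $K = I \cup \{j_0\}$, so $\fkM_{I,\{j_0\}} = \bigcap_{k \in K} R_{\bf e_k}R$. A secondary round of Mayer-Vietoris, using $\bigcap_{k \in K} R_{\bf e_k}R = (\bigcap_{k \in K \setminus \{k_0\}} R_{\bf e_k}R) \cap R_{\bf e_{k_0}}R$, reduces the problem—modulo sum ideals of the form $R_{\bf e_{K'}}R + R_{\bf e_{k_0}}R$ that appear as error terms—to the atomic cases $H^\bullet_{R_{\bf e_k}R}(M)$ for single indices $k$. These atomic cases are in turn handled by comparing $H^\bullet_{R_{\bf e_k}R}$ with $H^\bullet_\fkM$ through the Mayer-Vietoris sequence attached to the decomposition $\fkM = R_{\bf e_k}R + (\fkm R + \sum_{k' \neq k} R_{\bf e_{k'}}R)$, using the Artinian property of each graded piece of the multigraded local cohomology to pass from $\fkm$-cohomology back to the prescribed vanishing range.

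The main obstacle will be the bookkeeping in this base case: although the outer induction on $|J|$ is clean, several auxiliary ideals that are not themselves of the form $\fkM_{I',J'}$ arise from iterated Mayer-Vietoris steps within the intersection $\bigcap_k R_{\bf e_k}R$, and each such ideal must be controlled by its own sub-induction (typically on $r$ or on $|K|$). The cumulative degree shift by $\bf e$ in the conclusion (versus the weaker coordinatewise-strict condition in the hypothesis) arises precisely because one pays a shift of $\bf e_k$ along the $k$-th coordinate at each of the $r$ separate Mayer-Vietoris steps needed to bridge from $\fkM$ down to an arbitrary $\fkM_{I,J}$.
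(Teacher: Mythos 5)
Your inductive step is set up correctly: for $|J|\ge 2$ the identities $\fkM_{I,J}=\fkM_{I,J'}+\fkM_{I,\{j_0\}}$ and $\fkM_{I,J'}\cap\fkM_{I,\{j_0\}}=\fkM_{I\cup\{j_0\},J'}$ do hold (one can check them on graded components, since $R$ is standard $\mathbb N^r$-graded), and the Mayer--Vietoris sequence then does reduce the statement to the case $|J|=1$. Note, though, that the paper's one-line sketch runs the induction on $|I|$, not on $|J|$; your reduction is a genuinely different organization, and it reduces everything to the ideals $\fkM_{I,\{j_0\}}=\bigcap_{k\in K}R_{\bf e_k}R$ rather than to the sums $\fkM_{\emptyset,J}=\sum_{j\in J}R_{\bf e_j}R$.

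The real gap is in your base case $|J|=1$. First, the ``secondary round'' of Mayer--Vietoris you describe produces the ideals $\bigl(\bigcap_{k\in K'}R_{\bf e_k}R\bigr)+R_{\bf e_{k_0}}R$, which are \emph{not} of the form $\fkM_{I',J'}$ for any choice of $I',J'$ (already for $r=3$, $K'=\{1,2\}$, $k_0=3$ the component-by-component test ``($n_1\ge1$ and $n_2\ge1$) or $n_3\ge1$'' cannot be written as ``$n_i\ge1$ for all $i\in I'$ and $n_j\ge1$ for some $j\in J'$''). So neither the outer induction on $|J|$ nor the statement of the proposition covers these terms, and the promised ``sub-induction'' is left entirely unspecified; without a precise statement of what is being proved for these hybrid ideals and a well-founded ordering, the argument does not close. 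Second, the proposed treatment of the atomic case $H^{\bullet}_{R_{\bf e_k}R}(M)$ is not correct as written: setting $\fkb=\fkm R+\sum_{k'\ne k}R_{\bf e_{k'}}R$, the Mayer--Vietoris sequence for $\fkM=R_{\bf e_k}R+\fkb$ together with the hypothesis only yields an isomorphism
$$[H^i_{R_{\bf e_k}R}(M)]_{\bf n}\oplus[H^i_{\fkb}(M)]_{\bf n}\ \cong\ [H^i_{R_{\bf e_k}R\cap\fkb}(M)]_{\bf n}$$
in the prescribed range, and nothing has been said about the two right-hand pieces, so no vanishing follows. The appeal to ``the Artinian property of each graded piece'' is too vague to repair this; the known arguments for this kind of base case (Hyry, H\`a, Sarkar--Verma) go through the change-of-grading Lemma \ref{changeofgrading} and a genuine finiteness/spectral-sequence argument, not a single Mayer--Vietoris comparison with $\fkM$. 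Until the base case is made precise, the proposal is incomplete.
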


This can be proved by induction on the number ${}^{\sharp} I$ of the set $I$ and by using the Mayer-Vietoris sequence of local 
cohomology modules. As is noted above, if we take $I=\{1, \dots , r-1\}$ and $J=\{r\}$, then $\fkM_{\{1, \dots , r-1\}, \{r\}}=R_{++}$. 
Here is the special case of Proposition \ref{SV} which we will use later. 

\begin{Proposition}\label{lcvanish}
Let $\bf a=(a_1, \dots , a_r) \in \mathbb Z^r$. Suppose that 
$$[H^i_{\frak M}(M)]_{\bf n}=(0)$$
for all $i \geq 0$ and 
all $\bf n \in \mathbb Z^r$ with $n_k > a_k$ for some $k=1, \dots , r$. Then we have that 
$$[H^i_{R_{++}}(M)]_{\bf n}=(0)$$
for all $i \geq 0$ and all $\bf n \geq \bf a + \bf e$. 
\end{Proposition}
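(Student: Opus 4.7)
The plan is to read off Proposition \ref{lcvanish} as a direct specialization of Proposition \ref{SV}. Since Proposition \ref{SV} is already available and covers the vanishing of $[H^i_{\fkM_{I,J}}(M)]_{\bf n}$ for \emph{every} admissible pair $(I,J)$, what remains is simply to pick $(I,J)$ so that $\fkM_{I,J}$ coincides with the irrelevant ideal $R_{++}$, and then invoke that proposition.

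The first substantive step is to verify the identification
$$\fkM_{\{1,\dots,r-1\},\{r\}} \;=\; R_{++}.$$
Unwinding the definition, the left-hand side equals $\bigl(\bigcap_{i=1}^{r-1} R_{\bf e_i}R\bigr) \cap R_{\bf e_r}R = \bigcap_{i=1}^{r} R_{\bf e_i}R$. Since $R$ is standard $\mathbb N^r$-graded, each $R_{\bf e_i}R$ is the homogeneous ideal $\bigoplus_{\bf n \in \mathbb N^r,\; n_i \geq 1} R_{\bf n}$, so their intersection is $\bigoplus_{\bf n \geq \bf e} R_{\bf n}$. On the other hand, standardness also yields $R_{\bf e}=R_{\bf e_1}\cdots R_{\bf e_r}$, which gives both inclusions $R_{\bf e}R \subseteq \bigcap_i R_{\bf e_i}R$ (since $R_{\bf e}$ factors through each $R_{\bf e_i}$) and $\bigcap_i R_{\bf e_i}R \subseteq R_{\bf e}R$ (any homogeneous element of multidegree $\bf n \geq \bf e$ factors through a product of one element from each $R_{\bf e_i}$). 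Thus $\bigcap_i R_{\bf e_i}R = R_{\bf e}R = R_{++}$, as required.

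With this identification in hand, the second step is a one-line application: the hypothesis of Proposition \ref{SV} is literally the hypothesis of the present statement, and taking $I=\{1,\dots,r-1\}$, $J=\{r\}$ in its conclusion produces
$$[H^i_{R_{++}}(M)]_{\bf n}=(0) \quad \text{for all } i\geq 0 \text{ and all } \bf n\geq \bf a+\bf e,$$
which is exactly what we want. There is no genuine obstacle here; the only content is recognizing that the irrelevant ideal $R_{++}$ fits into the family of ideals $\fkM_{I,J}$ introduced just before Proposition \ref{SV}, so that the general multigraded vanishing statement proved there applies verbatim.
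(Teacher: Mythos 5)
Your proposal is correct and follows exactly the paper's approach: the paper also obtains Proposition \ref{lcvanish} by specializing Proposition \ref{SV} to $I=\{1,\dots,r-1\}$, $J=\{r\}$, after noting $\fkM_{\{1,\dots,r-1\},\{r\}}=R_{++}$. Your careful verification of this ideal identity (using standardness to show $\bigcap_i R_{\bf e_i}R = R_{\bf e}R$) fills in a detail the paper merely asserts.
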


\section{Complete and joint reductions}

In this section, we will prove the key result in this article which plays an important role in our proof of Theorem \ref{main}. 
Recall that for a finitely generated $\mathbb Z^r$-graded $R$-module $M$, the homogeneous support of $M$ is defined to be the set 
$$\Supp_{++}(M)=\{ P \in \Spec R \mid M_P \neq (0), P \ \mbox{is graded and} \ R_{++} \nsubseteq P \}. $$
In particular, we set $\Proj^r(R):=\Supp_{++}(R). $

We begin with the following which is the special case of \cite[Lemma 2.2]{Ha}. 

\begin{Lemma}\label{homsupport}
The following are equivalent. 
\begin{enumerate}
\item $\Supp_{++}(M)=\emptyset$
\item $M_{\bf n}=(0)$ for all $\bf n \gg \bf 0$
\item $\Supp_{++}(M/\fkm M)=\emptyset$
\end{enumerate}
\end{Lemma}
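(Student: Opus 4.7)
The plan is to run the cycle $(1)\Rightarrow(2)\Rightarrow(3)\Rightarrow(1)$, observing that the heart of the argument is the equivalence $(1)\Leftrightarrow(2)$, which is really a general statement about $\mathbb Z^r$-graded modules over a standard $\mathbb N^r$-graded ring and requires no local hypothesis on $A=R_{\bf 0}$. The locality of $A$ enters only through Nakayama's lemma when transferring the vanishing from $M/\fkm M$ back to $M$.

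For $(1)\Rightarrow(2)$, I would use that $\Ann_R M$ is a homogeneous ideal (hence its minimal primes are graded and lie in $\Supp(M)$), so the assumption $\Supp_{++}(M)=\emptyset$ forces every such minimal prime to contain $R_{++}$, giving $R_{++}^n M=(0)$ for some $n\geq 1$. Since $R$ is standard $\mathbb N^r$-graded, $R_{\bf m}=R_{\bf e_1}^{m_1}\cdots R_{\bf e_r}^{m_r}$ for every $\bf m\in\mathbb N^r$, and in particular $R_{++}^n$ is generated by $R_{n\bf e}$; then for any finite system of homogeneous generators $m_j\in M_{\bf d_j}$ of $M$ and any $\bf n$ with $\bf n\geq \bf d_j+n\bf e$ for all $j$, one factors $R_{\bf n-\bf d_j}=R_{\bf n-\bf d_j-n\bf e}\cdot R_{n\bf e}$ to conclude $R_{\bf n-\bf d_j}m_j\subseteq R\cdot R_{n\bf e}m_j=(0)$, hence $M_{\bf n}=(0)$. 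For $(2)\Rightarrow(1)$, I would take a graded prime $P$ with $R_{++}\not\subseteq P$ and exploit $R_{\bf e}=R_{\bf e_1}\cdots R_{\bf e_r}$ together with the primality of $P$ to extract elements $x_i\in R_{\bf e_i}\setminus P$ for every $i=1,\dots,r$; then $x=x_1\cdots x_r\in R_{\bf e}\setminus P$ satisfies $x^n m\in M_{\bf d+n\bf e}=(0)$ for $n\gg 0$ and every homogeneous $m\in M_{\bf d}$, forcing $M_P=(0)$.

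The remaining implications close the cycle. The direction $(1)\Rightarrow(3)$ is immediate from the inclusion $\Supp_{++}(M/\fkm M)\subseteq\Supp_{++}(M)$. For $(3)\Rightarrow(1)$, applying the already-proved implication $(1)\Rightarrow(2)$ to $M/\fkm M$ gives $\fkm M_{\bf n}=M_{\bf n}$ for all $\bf n\gg\bf 0$; since each $M_{\bf n}$ is a finitely generated $A$-module (because $R$ is Noetherian and $M$ is finitely generated over $R$) and $(A,\fkm)$ is local, Nakayama's lemma yields $M_{\bf n}=(0)$ for $\bf n\gg\bf 0$, whereupon $(2)\Rightarrow(1)$ closes the loop. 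The one mildly delicate step is the coordinate-wise extraction of $x_i\in R_{\bf e_i}\setminus P$ in the proof of $(2)\Rightarrow(1)$: this relies specifically on $R$ being standard (so that $R_{\bf e}$ decomposes as the product $R_{\bf e_1}\cdots R_{\bf e_r}$) and on writing an element of $R_{\bf e}\setminus P$ as a sum of such products to invoke primality; everything else is routine bookkeeping with the multigraded structure.
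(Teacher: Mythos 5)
Your proof is correct and follows essentially the same path as the paper: the heart of both is the observation that $\Supp_{++}(M)=\emptyset$ amounts to $R_{++}^n M=(0)$ for $n\gg 0$, which for finitely generated $M$ is equivalent to $M_{\bf n}=(0)$ for $\bf n\gg\bf 0$, and the link to $(3)$ is via Nakayama's lemma applied to the finitely generated $A$-modules $M_{\bf n}$. Your $(2)\Rightarrow(1)$ argues by directly localizing at a graded prime $P$ not containing $R_{++}$ rather than the paper's equivalent appeal to $R_{++}\subseteq\sqrt{\Ann_R(M)}$, but this is the same fact in slightly different clothing (and for that step you only need a single $x\in R_{\bf e}\setminus P$, not the coordinate-wise factors $x_i\in R_{\bf e_i}\setminus P$).
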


\begin{proof}
Note that \begin{eqnarray*}
\Supp_{++}(M) = \emptyset &\Leftrightarrow & R_{++} \subseteq \sqrt{\Ann_R(M)} \\
&\Leftrightarrow& R_{++}^{n} \subseteq \Ann_R(M) \ \mbox{for all} \ n \gg 0 \\
&\Leftrightarrow& R_{++}^{n}M=(0) \ \mbox{for all} \ n \gg 0 . 
\end{eqnarray*}
Since $M$ is finitely generated, the last condition is equivalent to 
$$M_{\bf n}=(0) \ \mbox{for all} \ \bf n \gg \bf 0.$$  
Hence, we have (1) $\Leftrightarrow$ (2). Therefore,  
$$\Supp_{++}(M/\fkm M)=\emptyset \Leftrightarrow M_{\bf n}/\fkm M_{\bf n}=(0) \ \mbox{for all} \ 
\bf n \gg \bf 0. $$ This is equivalent to $M_{\bf n}=(0)$ for all $\bf n \gg \bf 0$ by Nakayama's lemma. Hence, we have $(2) \Leftrightarrow (3)$. 
\end{proof}

The spread $s(M)$ of $M$, which was first introduced by Kirby and Rees \cite{KR}, 
is defined to be
one more than the dimension of the homogeneous support of $M/\fkm M$, i.e.,   
$$s(M)=\dim \Supp_{++}(M/\fkm M)+1. $$ 
Here we set $\dim \emptyset =-1$. Thus, 
the spread is an invariant associated to a multigraded module with a non-negative integer 
and, by Lemma \ref{homsupport}, it is zero if and only if its homogeneous support is empty. 
The spread can be described as the Krull dimension of a certain $\mathbb Z$-graded module associated to $M$. 
We set
$$R^{\Delta}=\bigoplus_{n \in \mathbb N} R_{n \bf e}$$
and call it the diagonal subalgebra of $R$. Similarly, we set 
$$M^{\Delta}=\bigoplus_{n \in \mathbb Z} M_{n \bf e}$$ 
and call it the diagonal submodule of $M$. Then 
$R^{\Delta}$ is a Noetherian standard $\mathbb N$-graded ring with $(R^{\Delta})_0=A$ and 
$M^{\Delta}$ is a finitely generated $\mathbb Z$-graded $R^{\Delta}$-module. 
With this notation, we have the following which can be found in the proof of \cite[Lemma 1.7]{KR}. 

\begin{Lemma}\label{spread}
The map 
$$\phi: \Supp_{++}(M) \to \Supp_{++}(M^{\Delta})$$ 
defined by $\phi(P)=P \cap R^{\Delta}$ is well-defined and bijective. 
\end{Lemma}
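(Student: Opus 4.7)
The plan is to verify well-definedness, injectivity, and surjectivity of $\phi$ separately, using throughout the following ``uniformization trick''. Given a homogeneous element $y$ of $R$ (or of $M$) of multi-degree $\bf d=(d_1,\dots,d_r)$ and a graded prime $P$ of $R$ with $R_{++}\nsubseteq P$, one first observes that $R_{\bf e_i}\nsubseteq P$ for each $i$ (since $R_{\bf e}=R_{\bf e_1}\cdots R_{\bf e_r}\nsubseteq P$); choosing $x_i\in R_{\bf e_i}\setminus P$ and setting $D=\max_i d_i$, the new element $y\cdot \prod_i x_i^{D-d_i}$ has diagonal multi-degree $D\bf e$ and still lies outside $P$ whenever $y$ does.

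For well-definedness, I would set $\fkq=P\cap R^{\Delta}$ and check that $\fkq$ is a graded prime of $R^{\Delta}$ with $R_{\bf e}\nsubseteq\fkq$, which is immediate. To verify $[M^{\Delta}]_{\fkq}\neq(0)$, take a homogeneous $m\in M$ with $\Ann_R(m)\subseteq P$ and apply the trick to form $m'=m\cdot\prod_i x_i^{D-d_i}\in M^{\Delta}$; then for any $a\in R^{\Delta}\setminus\fkq\subseteq R\setminus P$, the element $a\prod_i x_i^{D-d_i}$ avoids $P$ and so does not annihilate $m$, giving $am'\neq 0$. For injectivity, suppose $P_1\neq P_2$ have the same contraction $\fkq$; a homogeneous $y\in P_1\setminus P_2$ yields, via the trick with $x_i\in R_{\bf e_i}\setminus P_2$, an element $z\in R^{\Delta}$ with $z\in P_1\setminus P_2$, contradicting $P_1\cap R^{\Delta}=P_2\cap R^{\Delta}$.

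Surjectivity is the main obstacle. Given $\fkq\in\Supp_{++}(M^{\Delta})$ witnessed by a homogeneous $m\in M^{\Delta}$ with $\Ann_{R^{\Delta}}(m)\subseteq\fkq$, I would apply Zorn's lemma to the family $\Sigma$ of graded ideals $I\subseteq R$ containing $\Ann_R(m)+\fkq R$ and satisfying $I\cap R^{\Delta}\subseteq\fkq$. Nonemptiness of $\Sigma$ hinges on the identity $\fkq R\cap R^{\Delta}=\fkq$, which reflects the standardness of the grading: any homogeneous element of $\fkq R$ of diagonal multi-degree $n\bf e$ is a sum $\sum q_j r_j$ with $q_j\in\fkq$ of multi-degree $n_j\bf e$, forcing $r_j\in R_{(n-n_j)\bf e}\subseteq R^{\Delta}$. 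A maximal element $P\in\Sigma$ is shown to be prime by the standard argument: if $ab\in P$ with $a,b$ homogeneous outside $P$, the properly larger ideals $P+Ra$ and $P+Rb$ produce homogeneous $u\in(P+Ra)\cap R^{\Delta}$ and $v\in(P+Rb)\cap R^{\Delta}$ outside $\fkq$ whose product lies in $P\cap R^{\Delta}\subseteq\fkq$, contradicting primality of $\fkq$. Finally, $P\supseteq\fkq R$ forces $P\cap R^{\Delta}=\fkq$, while $R_{++}\nsubseteq P$ follows from $R_{\bf e}\nsubseteq\fkq$, and $M_P\neq(0)$ from $\Ann_R(m)\subseteq P$.
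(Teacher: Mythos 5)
Your argument is correct, and the well-definedness and injectivity parts are essentially the same as the paper's (the ``uniformization trick'' $y \mapsto y\cdot\prod_i x_i^{D-d_i}$ is exactly the paper's observation that one can multiply by $b\in R\setminus P$ to land in $R^{\Delta}$, just made systematic). The divergence is in surjectivity. The paper gives an \emph{explicit} description of the preimage: $P$ is the set of all $a$ whose homogeneous components $a_i$ satisfy $a_iR\cap R^{\Delta}\subseteq\fkq$, and then verifies directly that this $P$ is a graded prime with $R_{++}\nsubseteq P$, $M_P\neq(0)$, and $P\cap R^{\Delta}=\fkq$. You instead run a Zorn's-lemma argument on the family $\Sigma$ of graded ideals containing $\Ann_R(m)+\fkq R$ whose contraction to $R^{\Delta}$ stays in $\fkq$, and extract a maximal element, whose primality is obtained by the usual $(P+Ra)(P+Rb)\subseteq P$ argument. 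Both work; the paper's construction buys constructiveness and avoids the choice principle, while your approach is perhaps more mechanical to set up since one only has to check that $\Sigma$ is nonempty and closed under unions of chains. Two small expository points worth tightening: (i) nonemptiness of $\Sigma$ needs not just $\fkq R\cap R^{\Delta}=\fkq$ but also $\Ann_R(m)\cap R^{\Delta}=\Ann_{R^{\Delta}}(m)\subseteq\fkq$, and then the graded decomposition of a diagonal-degree element of the sum into its two pieces; (ii) you should remark that a graded ideal which is prime on homogeneous elements is prime (true for $\mathbb{Z}^r$-gradings). Neither is a gap, just unsaid.
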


\begin{proof}
Note that $\Supp_{++}(M)=\emptyset \Leftrightarrow \Supp_{++}(M^{\Delta})=\emptyset$ by Lemma \ref{homsupport}. 
So, the assertion is clear in this case. Suppose that $\Supp_{++}(M)\neq \emptyset$. 
Let $P \in \Supp_{++}(M)$ and set $\fkp:=\phi(P)=P \cap R^{\Delta}$. 

We first show that $\fkp \in \Supp_{++}(M^{\Delta})$. 
It is clear that $\fkp$ is a graded prime ideal of $R^{\Delta}$. 
Suppose that $(M^{\Delta})_{\fkp}=(0)$. 
Then there exists $a \in R^{\Delta} \setminus \fkp$ such that $a M^{\Delta}=(0)$. 
Let $x \in M$ be a homogeneous element. 
Then there exists 
$$b \in R \setminus P \ \mbox{such that} \ bx \in M^{\Delta}$$ 
because $P \nsupseteq R_{++}$ so that 
$P \nsupseteq R_{\bf e_i} \ \mbox{for all} \ i=1, \dots , r. $ 
Therefore, $ab \in R \setminus P$ and $(ab)x=a(bx)=0$. This contradicts to the fact that 
$M_P \neq (0)$. Hence, $(M^{\Delta})_{\fkp} \neq (0)$. 
Suppose that $\fkp \supseteq (R^{\Delta})_{+}$. Then 
$$P \supseteq \fkp R \supseteq (R^{\Delta})_+R=R_{\bf e}R=R_{++}$$ 
which is a contradiction. Hence, $\fkp \nsupseteq (R^{\Delta})_{+}$. 
Thus, $\fkp \in \Supp_{++}(M^{\Delta})$. 

We next show that $\phi$ is injective. Let $P, P' \in \Supp_{++}(M)$ such that 
$P \cap R^{\Delta}=P' \cap R^{\Delta}$. It is enough to show that 
$P \subseteq P'$. Take any $a \in P$. 
Since $P$ is graded, we may assume that $a$ is homogeneous. 
Then there exists 
$$b \in R \setminus P' \ \mbox{such that} \ ab \in R^{\Delta}$$ 
because $P' \nsupseteq R_{++}$ so that  $P' \nsupseteq R_{\bf e_i}$ for all $i=1, \dots , r$. 
Hence, 
$$ab \in P \cap R^{\Delta}=P' \cap R^{\Delta} \subseteq P'. $$ 
Therefore, $a \in P'$, and, hence, $P \subseteq P'$.

We finally show that $\phi$ is surjective. 
Take any $\fkp \in \Supp_{++}(M^{\Delta})$ and set 
$$P:=\left\{ a \in R \mid a=\sum_i a_i \ \mbox{where} \ a_i \ \mbox{is homogeneous and} \ a_i R \cap R^{\Delta} \subseteq \fkp \right\}. $$
Then $P$ is a graded ideal of $R$. If $P \supseteq R_{++}$, then $R_{\bf e} \subseteq \fkp$ so that 
$R_{\bf e}R^{\Delta} =(R^{\Delta})_+ \subseteq \fkp$. This is a contradiction. Hence, $P \nsupseteq R_{++}$. Let 
$a, b \in R$ be homogeneous elements such that $ab \in P$. 
Suppose that 
$$a \notin P \ \mbox{and} \  b \notin P. $$
Then 
$$aR \cap R^{\Delta} \nsubseteq \fkp \ \mbox{and} \ bR \cap R^{\Delta} \nsubseteq \fkp$$ 
so that there exist elements
$$f \in (aR \cap R^{\Delta}) \setminus \fkp \ \mbox{and} \ g \in (bR \cap R^{\Delta}) \setminus \fkp. $$ 
Then  
$$fg \in abR \cap R^{\Delta} \subseteq \fkp $$
since $ab \in P$. This is a contradiction. Therefore, $a \in P$ or $b \in P$, 
and, hence, $P$ is a prime ideal of $R$. It is clear that $M_P \neq (0)$ since $(M^{\Delta})_{\fkp} \neq (0)$.
Thus, $P \in \Supp_{++}(M)$. 
It is also clear that $P \cap R^{\Delta}=\fkp$.  
Consequently, $\phi$ is surjective. 
This completes the proof. 
\end{proof}

\begin{Corollary}\label{spreadformula}\cite[Lemma 1.7]{KR}
The equalities 
$$s(M)=s(M^{\Delta})=\dim (M^{\Delta}/\fkm M^{\Delta})$$ 
hold true.
\end{Corollary}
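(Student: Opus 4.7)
The plan is to bootstrap both equalities from Lemma~\ref{spread} applied to the module $M/\fkm M$, together with the standard dimension formula for graded modules over a standard graded algebra over a field. I begin from the observation that passing to the diagonal commutes with reduction modulo $\fkm$,
$$(M/\fkm M)^{\Delta}=M^{\Delta}/\fkm M^{\Delta},$$
since $\fkm$ is concentrated in degree $\bf 0$. In particular
$$s(M)=\dim\Supp_{++}(M/\fkm M)+1,\qquad s(M^{\Delta})=\dim\Supp_{++}(M^{\Delta}/\fkm M^{\Delta})+1,$$
where in the second expression $(R^{\Delta})_{++}=(R^{\Delta})_{+}$ because $R^{\Delta}$ is $\mathbb N$-graded.

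For the first equality $s(M)=s(M^{\Delta})$, I apply Lemma~\ref{spread} with $M/\fkm M$ in place of $M$ to obtain a bijection
$$\phi\colon\Supp_{++}(M/\fkm M)\longrightarrow\Supp_{++}(M^{\Delta}/\fkm M^{\Delta}),\qquad P\mapsto P\cap R^{\Delta}.$$
The remaining task is to upgrade $\phi$ to an order isomorphism, so that chains correspond to chains and the Krull dimensions of the two posets agree. The direction $P\subseteq P'\Rightarrow\phi(P)\subseteq\phi(P')$ is automatic. The reverse direction is exactly the computation used to prove the injectivity of $\phi$ in Lemma~\ref{spread}: given $\phi(P)\subseteq\phi(P')$ and a homogeneous $a\in P$, one uses that $P'\not\supseteq R_{\bf e_i}$ for every $i$ to produce a homogeneous $b\in R\setminus P'$ with $ab\in R^{\Delta}$; then $ab\in P\cap R^{\Delta}\subseteq P'$, hence $a\in P'$, yielding $P\subseteq P'$.

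For the second equality $s(M^{\Delta})=\dim(M^{\Delta}/\fkm M^{\Delta})$, set $T=R^{\Delta}/\fkm R^{\Delta}$ and $N=M^{\Delta}/\fkm M^{\Delta}$, so that $T$ is a standard $\mathbb N$-graded algebra over the field $k=A/\fkm$ and $N$ is a finitely generated graded $T$-module. The ideal $\Ann_T N$ is graded, so its minimal primes are graded, and for any graded prime $Q\subsetneq T_{+}$ the quotient $T/Q$ is a standard graded domain over $k$ satisfying $\dim T/Q=\dim\Proj(T/Q)+1$ by Noether normalization. Taking the maximum over minimal primes $Q\neq T_{+}$ of $\Ann_T N$ yields $\dim N=\dim\Supp_{+}(N)+1$; in the residual case where $\Supp(N)\subseteq\{T_{+}\}$, both sides equal $0$ under the convention $\dim\emptyset=-1$. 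Combining this with the first equality gives the corollary.

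The main obstacle is the verification that $\phi$ reflects inclusions; once this order-theoretic upgrade of Lemma~\ref{spread} is in place, the rest of the corollary follows quickly, with the second equality being essentially the classical dimension formula for graded modules over a standard graded algebra over a field.
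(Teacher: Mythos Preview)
Your proof is correct and follows essentially the same approach as the paper: Lemma~\ref{spread} applied to $M/\fkm M$ for the first equality, and the dimension formula for standard $\mathbb N$-graded algebras over a field for the second. The only notable differences are that you explicitly verify $\phi$ is an order isomorphism (the paper leaves this implicit, asserting the first equality directly from the bijection), and for the second equality the paper reduces to the ring case via $s(M)=s(R/\Ann_R M)$ and then quotes $\dim R=\height R_+=\dim\Proj R+1$, whereas you argue directly via the minimal primes of $\Ann_T N$---an equivalent route.
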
 

\begin{proof}
The first equality is clear because $(M/\fkm M)^{\Delta} =M^{\Delta}/\fkm M^{\Delta}$ and the map 
$$\phi: \Supp_{++}(M/\fkm M) \to \Supp_{++}(M^{\Delta}/\fkm M^{\Delta})$$
defined by $\phi(P)=P \cap (R/\fkm R)^{\Delta}$ is bijective by Lemma \ref{spread}.  
To show the second equality, we may assume that $r=1$. Moreover, 
since $s(M)=s(M/\fkm M)$ and $s(M)=s(R/\Ann_R (M))$, it is enough to show that 
$$s(R)=\dim R$$
for a Noetherian standard $\mathbb N$-graded ring $R$ such that $R_{\bf 0}$ is a field. 
Then 
$\dim R=\height_R R_+=\dim \Proj R +1=s(R). $
\end{proof}

\begin{Remark}
{\rm Let $I_1, \dots , I_r$ be ideals in $A$ and let $R=\Rees(I_1, \dots, I_r)$ be the multi-Rees algebra of the ideals 
$I_1, \dots , I_r$, which is a subalgebra $A[I_1t_1, \dots , I_rt_r]$ of a polynomial ring $A[t_1, \dots , t_r]$. 
Then the diagonal subalgebra of $R$
$$R^{\Delta}=\Rees(I_1 \cdots I_r)$$ 
is the ordinary Rees algebra of the ideal 
$I_1 \cdots I_r$. Therefore, the spread of $R$ 
$$s(R) =\dim (R^{\Delta}/\fkm R^{\Delta})=\lambda(I_1 \cdots I_r)$$ 
is the analytic spread of the ideal $I_1 \cdots I_r$. 
} 
\end{Remark}

Here is the definition of complete and joint reductions of multigraded modules which was introduced by Kirby and Rees \cite{KR}.

\begin{Definition}
Let $\ell \geq 0$ and $\bf q=(q_1, \dots , q_r)  \in \mathbb N^r$. 
\begin{enumerate}
\item A set of homogeneous elements 
$$\{a_{ij} \in R_{\bf e_i} \mid i=1, \dots , r, \ j=1, \dots , \ell \}$$
is called 
a complete reduction of length $\ell$ with respect to $M$, if the equality
$$M_{\bf n}=[(a_{1j}a_{2j} \cdots a_{rj} \mid j=1, \dots , \ell)M]_{\bf n}$$
holds true for all $\bf n \gg \bf 0$. 
\item A set of homogeneous elements 
$$\{ a_{ij} \in R_{\bf e_i} \mid i=1, \dots , r, \ j=1, \dots , q_i \}$$
is called 
a joint reduction of type $\bf q$ with respect to $M$, if the equality 
$$M_{\bf n}=[(a_{ij} \mid i=1, \dots ,  r, \ j=1, \dots , q_i)M]_{\bf n}$$
holds true for all $\bf n \gg \bf 0$. 
\end{enumerate}
We define the empty set to be a complete reduction of length $0$ and a joint reduction of type $\bf 0$ with respect to $M$. 
\end{Definition}

\begin{Remark}\label{joint}
{\rm 
Let $\bf q \in \mathbb N^r$ such that $|\bf q|=\ell$. Then, as Kirby and Rees noted in \cite{KR}, 
joint reductions of type $\bf q$ with respect to $M$ can be constructed from 
a complete reduction of length $\ell$ with respect to $M$. Indeed, let 
$$\mathcal I:=\{ a_{ij} \in R_{\bf e_i} \mid i=1, \dots , r, \ j=1, \dots , \ell\}$$ 
be a complete reduction of length $\ell$ with respect to $M$. 
Let $\sigma_1, \dots , \sigma_r$ be any partition of $\{1, \dots , \ell\}$ 
into $r$-sets such that ${}^{\sharp} \sigma_i=q_i$, i.e., 
$$\sigma_1 \amalg \dots \amalg \sigma_r=\{1, \dots , \ell \}. $$
Let $\sigma_i=\{s_{i1}, \dots , s_{iq_i} \}$. Then the set  
$$\mathcal J:=\{a_{i s_{ij}} \in R_{\bf e_i} \mid i=1, \dots , r, \ j=1, \dots , q_i \} \subset \mathcal I$$ 
is a joint reduction of type $\bf q$ with respect to $M$ because
$$M_{\bf n}=[(a_{1j}a_{2j} \cdots a_{rj} \mid j=1, \dots , \ell)M]_{\bf n} \subseteq [\mathcal J M]_{\bf n} \subseteq M_{\bf n}$$ 
for all $\bf n \gg \bf 0$. }
\end{Remark}

Here is our key result which plays an important role in our proof of Theorem \ref{main}.

\begin{Theorem}\label{compred}
Assume that the residue field $A/\fkm$ is infinite and let $s=s(M)$. Then there exists a complete reduction of length $s$ with respect to $M$ 
$$\{a_{ij} \in R_{\bf e_i} \mid i=1, \dots , r, \ j=1, \dots , s \}$$ 
such that for any $i_1, \dots ,  i_s \in \{1, \dots , r\}$, 
$$a_{i_1 1}, a_{i_2 2}, \dots , a_{i_s s} \ \mbox{is an} \ M\mbox{-filter-regular sequence}. $$

In particular, for any $\bf q \in \mathbb N^r$ such that $|\bf q|=s$, there exists a joint reduction of type $\bf q$ with respect to $M$ 
$$\{a_{ij} \in R_{\bf e_i} \mid i=1, \dots , r, \ j=1, \dots , q_i \}$$ 
such that 
$$a_{11}, a_{12}, \dots , a_{1 q_1}, a_{21}, \dots , a_{2 q_2}, \dots , a_{r q_r} \ \mbox{is an} \ M\mbox{-filter-regular sequence}. $$ 
\end{Theorem}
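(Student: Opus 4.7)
The plan is to build the complete reduction one column at a time via a prime-avoidance argument that exploits the infinite residue field $A/\fkm$. At each stage $j+1$ the new elements $a_{1,j+1}, \ldots, a_{r,j+1}$ will be chosen to miss a single finite list of prime ideals of $R$; this list is assembled both from the associated primes of the finitely many modules obtained by quotienting $M$ by each column-order path $a_{i_1 1}, \ldots, a_{i_j j}$ built so far (to preserve filter-regularity along all such paths), and from the top-dimensional minimal primes of $\Supp_{++}\bigl((M/(b_1, \ldots, b_j)M)/\fkm\bigr)$, where $b_k := a_{1k}\cdots a_{rk} \in R_{\bf e}$ (to force a strict spread drop at each step).

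Formally, I would induct on $j = 0, 1, \ldots, s$, carrying the hypothesis (i) that every column-order path $a_{i_1 1}, \ldots, a_{i_j j}$ is $M$-filter-regular, and (ii) that $s(N_j) \leq s - j$, where $N_j := M/(b_1, \ldots, b_j)M$. At the step $j \rightsquigarrow j+1$, let $\mathcal{P}$ be the finite family of graded primes of $R$ consisting of all members of $\Ass_R\bigl(M/(a_{i_1 1}, \ldots, a_{i_j j})M\bigr) \setminus {\rm V}(R_{++})$ as $(i_1, \ldots, i_j)$ ranges over $\{1, \ldots, r\}^j$, together with those minimal primes of $\Supp_{++}(N_j/\fkm N_j)$ whose contractions to $R^{\Delta}$ realize $\dim(N_j^{\Delta}/\fkm N_j^{\Delta}) = s(N_j)$ via the bijection of Lemma \ref{spread}. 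Each $P \in \mathcal{P}$ satisfies $R_{++} \not\subseteq P$; since $R$ is standard $\mathbb N^r$-graded this is equivalent to $R_{\bf e_i} \not\subseteq P$ for every $i$, so $P \cap R_{\bf e_i}$ is a proper $A$-submodule of the finitely generated $A$-module $R_{\bf e_i}$. Nakayama's lemma then gives $(P \cap R_{\bf e_i}) + \fkm R_{\bf e_i} \subsetneq R_{\bf e_i}$, which reduces modulo $\fkm R_{\bf e_i}$ to finitely many proper $A/\fkm$-subspaces of the finite-dimensional space $R_{\bf e_i}/\fkm R_{\bf e_i}$; since $A/\fkm$ is infinite, a lift $a_{i, j+1}$ outside every $P \in \mathcal{P}$ exists. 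The first batch of avoidances extends filter-regularity to paths of length $j+1$, while the product $b_{j+1} = \prod_i a_{i, j+1}$ lies outside every prime of the second batch (primes are closed under multiplicative complements), so killing $b_{j+1}$ in $N_j^{\Delta}/\fkm N_j^{\Delta}$ strictly drops its Krull dimension, which via Corollary \ref{spreadformula} gives $s(N_{j+1}) \leq s(N_j) - 1$ and hence (ii) at $j+1$.

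At $j = s$ the hypothesis forces $s(N_s) = 0$, which by Lemma \ref{homsupport} is exactly the complete-reduction identity $M_{\bf n} = [(b_1, \ldots, b_s)M]_{\bf n}$ for $\bf n \gg \bf 0$; together with (i) this proves the main clause. For the ``In particular'' clause I would apply Remark \ref{joint} with the \emph{consecutive-block} partition $\sigma_i = \{q_1 + \cdots + q_{i-1} + 1, \ldots, q_1 + \cdots + q_i\}$: the resulting joint reduction, in its stated row-major order, then coincides with the column-order path $a_{i_1 1}, \ldots, a_{i_s s}$ where $i_c = i$ iff $c \in \sigma_i$, so its filter-regularity is inherited from the main clause. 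The main technical delicacy I expect is spelling out the spread-drop step: one must check that $(N_j/b_{j+1} N_j)^{\Delta} = N_j^{\Delta}/b_{j+1} N_j^{\Delta}$ and that a degree-one element of $R^{\Delta}$ missing the top-dimensional minimal primes of $N_j^{\Delta}/\fkm N_j^{\Delta}$ really strictly drops its Krull dimension, both routine but requiring care in the multigraded-to-diagonal translation. All other conditions in $\mathcal{P}$ reduce to standard graded prime-avoidance bookkeeping.
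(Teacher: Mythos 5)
Your proof is correct and follows essentially the same strategy as the paper: an inductive column-by-column construction, at each step choosing the new elements $a_{i,t+1}$ via prime avoidance (enabled by the infinite residue field) so as to simultaneously preserve $M$-filter-regularity along every column-order path and ensure that $b_{t+1}$ is a parameter for $M^{\Delta}/(\fkm M^{\Delta}+(b_1,\dots,b_t)M^{\Delta})$, terminating when $s(M/(b_1,\dots,b_s)M)=0$. The only organizational difference is that the paper factors out the single-step prime avoidance (with the $\Assh_{R^\Delta}$ primes passed through the diagonal bijection) as a separate lemma (Lemma~\ref{key}), whereas you inline it; your explicit choice of the consecutive-block partition for the joint-reduction clause is also what the paper's terse appeal to Remark~\ref{joint} implicitly relies on.
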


Before the proof of Theorem \ref{compred}, we need the following lemma. 

\begin{Lemma}\label{key}
Let $0 \leq t < s$ and consider a set 
$$\{ a_{ij} \in R_{\bf e_i} \mid i=1, \dots , r, \ j=1, \dots,  t \}$$ 
such that 
$$b_1, \dots , b_t \ \mbox{is a subsystem of parameters for} \  M^{\Delta}/\fkm M^{\Delta}$$ 
where $b_j:=a_{1j} a_{2j} \cdots a_{rj} \in R_{\bf e}$ for $j=1, \dots , t$. Then for any finite subset 
$\mathcal X \subset \Proj^r(R)$ which allows the empty set, there exists a set 
$$\{ a_{i t+1} \in R_{\bf e_i} \mid i=1, \dots , r \}$$ such that 
\begin{itemize}
\item $a_{i t+1} \notin Q$ for any $Q \in \mathcal X$, 
\item $b_{t+1}:=a_{1 t+1} a_{2 t+1} \cdots a_{r t+1}$ is a parameter for $M^{\Delta}/(\fkm M^{\Delta}+(b_1, \dots , b_t)M^{\Delta}). $
\end{itemize}
\end{Lemma}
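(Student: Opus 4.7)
The plan is to reduce the construction of the elements $a_{i,t+1}$ to a graded prime avoidance problem inside the $A$-module $R_{\bf e_i}$, using the infinite residue field to carry out the avoidance.

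First, set $\bar{M}:=M^\Delta/(\fkm M^\Delta+(b_1,\dots,b_t)M^\Delta)$, which has dimension $s-t\ge 1$ by the subsystem-of-parameters hypothesis (and by Corollary \ref{spreadformula}). Let $\fkp_1,\dots,\fkp_m$ be the minimal primes of $\bar{M}$ in $R^\Delta$ of maximal dimension $s-t$; for $b_{t+1}$ to remain a parameter modulo $(b_1,\dots,b_t)$, it must avoid each $\fkp_k$. I first claim $\fkp_k\in\Supp_{++}(\bar{M})$: each $\fkp_k$ is graded and contains $\fkm R^\Delta+(b_1,\dots,b_t)$, and since $\dim R^\Delta/\fkp_k=s-t>0$ it cannot also contain $R^\Delta_+$, for otherwise it would equal the homogeneous maximal ideal $\fkm R^\Delta+R^\Delta_+$ and force dimension $0$. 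Next observe $\bar{M}=M_1^\Delta$ where $M_1:=M/(\fkm M+(b_1,\dots,b_t)M)$. Lemma \ref{spread} applied to $M_1$ therefore lifts each $\fkp_k$ to a unique graded prime $P_k\in\Supp_{++}(M_1)\subseteq\Proj^r(R)$ with $P_k\cap R^\Delta=\fkp_k$.

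Now collect the finite family $\mathcal{F}:=\mathcal{X}\cup\{P_1,\dots,P_m\}\subseteq\Proj^r(R)$. For every $P\in\mathcal{F}$ we have $P\nsupseteq R_{++}=(R_{\bf e_1}R)\cdots(R_{\bf e_r}R)$, and since $P$ is prime this forces $R_{\bf e_i}\nsubseteq P$ for every $i$. Fix $i$ and write $V:=R_{\bf e_i}$, which is a finitely generated $A$-module because $R$ is a standard Noetherian $A$-algebra. Each $V\cap P$ is then a proper $A$-submodule of $V$, and by Nakayama's lemma its image in the finite-dimensional $A/\fkm$-vector space $V/\fkm V$ is a proper subspace. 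Because $A/\fkm$ is infinite, the vector space $V/\fkm V$ is not a finite union of proper subspaces; lifting any point outside this union produces the required $a_{i,t+1}\in R_{\bf e_i}$ avoiding every $P\in\mathcal{F}$ simultaneously.

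Finally, set $b_{t+1}:=a_{1,t+1}\cdots a_{r,t+1}\in R_{\bf e}$. The first bulleted condition of the conclusion is immediate from the construction. For the second, each $P_k$ is prime and contains none of the factors $a_{i,t+1}$, so $b_{t+1}\notin P_k$, and hence $b_{t+1}\notin P_k\cap R^\Delta=\fkp_k$ for every $k$. Thus $b_{t+1}$ avoids every minimal prime of $\bar{M}$ of maximal dimension, which is precisely the statement that $b_{t+1}$ is a parameter for $\bar{M}$. The only substantive point is the avoidance step: one must find an element of a single graded component $R_{\bf e_i}$ (not of an ideal) missing finitely many prescribed graded primes, and this is exactly where the infinite-residue-field hypothesis is essential, since the usual ideal-theoretic prime avoidance does not directly apply to a graded piece viewed as an $A$-submodule.
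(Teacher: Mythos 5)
Your proposal is correct and follows essentially the same route as the paper's proof: identify the primes of maximal dimension of $\bar{M}$ (you phrase this as minimal primes of maximal dimension, the paper uses $\Assh$ — the same set), lift them via Lemma \ref{spread} to $\Proj^r(R)$, adjoin $\mathcal X$, and use Nakayama plus the infinite residue field to avoid the finitely many proper subspaces of $R_{\mathbf e_i}/\fkm R_{\mathbf e_i}$. The only cosmetic difference is that you apply Lemma \ref{spread} to $M_1=M/(\fkm M+(b_1,\dots,b_t)M)$ while the paper applies it to $M/(b_1,\dots,b_t)M$, but both supports contain the relevant primes and the lift goes through identically.
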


\begin{proof}
Let $\Assh_{R^{\Delta}} (M^{\Delta}/(\fkm M^{\Delta}+(b_1, \dots , b_t)M^{\Delta}))=\{ \fkp_1, \dots , \fkp_u \}$. Then 
$(R^{\Delta})_+ \nsubseteq \fkp_i$ for all $i=1, \dots , u$. Indeed, if $(R^{\Delta})_+ \subseteq \fkp_i$ for some $i=1, \dots , u$, then
\begin{eqnarray*}
0&=&\dim( R^{\Delta}/(\fkm R^{\Delta}+(R^{\Delta})_+ ) )\\
&\geq& \dim(R^{\Delta}/\fkp_i) \\
&=&\dim (M^{\Delta}/(\fkm M^{\Delta}+(b_1, \dots , b_t)M^{\Delta})). 
\end{eqnarray*}
This contradicts to 
$$\dim (M^{\Delta}/(\fkm M^{\Delta}+(b_1, \dots , b_t)M^{\Delta}))=\dim (M^{\Delta}/\fkm M^{\Delta}) -t =s-t>0. $$ 
Hence,  
$$\Assh_{R^{\Delta}} (M^{\Delta}/(\fkm M^{\Delta}+(b_1, \dots , b_t)M^{\Delta})) \subseteq \Supp_{++} (M^{\Delta}/(b_1, \dots ,b_t)M^{\Delta}). $$
Note that $M^{\Delta}/(b_1, \dots ,b_t)M^{\Delta}=(M/(b_1, \dots , b_t)M)^{\Delta}$ since $b_1, \dots , b_t \in R_{\bf e}$. 
By Lemma \ref{spread}, for each $\fkp_i$ there exists the unique $P_i \in \Supp_{++} (M/(b_1, \dots , b_t)M)$ such that 
$\fkp_i=P_i \cap R^{\Delta}$. Then each $P_i \nsupseteq R_{\bf e_k}$ for any $k=1, \dots , r$ because $P_i \nsupseteq R_{++}$. 
On the other hand, if we let 
$$\mathcal X=\{Q_1, \dots , Q_v\}$$
which allows the empty set, then each $Q_j \nsupseteq R_{\bf e_k}$ for any $k=1, \dots , r$ because $Q_j \nsupseteq R_{++}$. 
Therefore, we have that for any $k=1, \dots , r$, any $i=1, \dots , u$ and $j=1, \dots , v$, 
\begin{eqnarray*}
P_i \cap R_{\bf e_k} &\subsetneq& R_{\bf e_k} \\
Q_j \cap R_{\bf e_k} &\subsetneq& R_{\bf e_k}.  
\end{eqnarray*}
Therefore, 
\begin{eqnarray*}
U_i &:=& ([P_i \cap R_{\bf e_k}]+\fkm R_{\bf e_k})/\fkm R_{\bf e_k} \subsetneq R_{\bf e_k}/\fkm R_{\bf e_k}, \\
V_j&:=& ([Q_j \cap R_{\bf e_k}]+\fkm R_{\bf e_k})/\fkm R_{\bf e_k} \subsetneq R_{\bf e_k}/\fkm R_{\bf e_k}
\end{eqnarray*}
by Nakayama's Lemma. Since the residue field $A/\fkm$ is infinite, 
$$W:=\left[\bigcup_{i=1}^u U_i \right]
\cup
\left[\bigcup_{j=1}^v V_j \right] \subsetneq R_{\bf e_k}/\fkm R_{\bf e_k}. $$
Hence, there exists $a_{k t+1} \in R_{\bf e_k}$ such that $\bar{a_{k t+1}} \in (R_{\bf e_k}/\fkm R_{\bf e_k}) \setminus W$. 
Thus, 
$$a_{k t+1} \notin \left[\bigcup_{i=1}^u P_i \right] \cup \left[ \bigcup_{j=1}^v Q_j \right]. $$
Then it is clear that $a_{k t+1} \notin Q$ for any $Q \in \mathcal X$ and 
$b_{t+1}:=a_{1 t+1} a_{2 t+1} \cdots a_{r t+1} \in R_{\bf e}$ is a parameter for 
$M^{\Delta}/(\fkm M^{\Delta} + (b_1, \dots , b_t)M^{\Delta})$ because 
$$b_{t+1} \notin \bigcup_{i=1}^u \fkp_i.  $$
This completes the proof. 
\end{proof}

\begin{proof}[Proof of Theorem \ref{compred}]
The assertion is clear when $s=0$. Suppose $s \geq 1$ and let $0 \leq t < s$. 
Assume that there exists a set of homogeneous elements 
$$\{a_{ij} \in R_{\bf e_i} \mid i=1, \dots , r, \ j=1, \dots , t\}$$
such that 
\begin{itemize}
\item $a_{i_1 1}, \dots , a_{i_t t}$ is an $M$-filter-regular sequence for any $i_1, \dots , i_t \in \{1, \dots, r\}$, 
\item $b_1, \dots , b_t$ is a subsystem of parameters for $M^{\Delta}/\fkm M^{\Delta}$
\end{itemize}
where $b_j=a_{1j}a_{2j} \cdots a_{rj} \in R_{\bf e}$ for $j=1, \dots , t$. 
Then by applying Lemma \ref{key} for the set 
$$\mathcal X:=\bigcup_{i_1, \dots , i_t \in \{1, \dots, r\}} \Ass_R(M/(a_{i_1 1}, \dots , a_{i_t t})M) \setminus {\rm V}(R_{++})$$
which allows the empty set, we have a set of homogeneous elements 
$$\{ a_{i t+1} \in R_{\bf e_i} \mid i=1, \dots , r\}$$ 
such that
\begin{itemize}
\item $a_{i_1 1}, \dots , a_{i_t t}, a_{i t+1}$ is an $M$-filter-regular sequence for any $i_1, \dots , i_t, i \in \{1, \dots , r\}$, 
\item $b_1, \dots , b_t, b_{t+1}$ is a subsystem of parameters for $M^{\Delta}/\fkm M^{\Delta}$
\end{itemize}
where $b_{t+1}=a_{1 t+1}a_{2 t+1} \cdots a_{r t+1} \in R_{\bf e}$. 
Thus, by repeating this procedure, we can construct a set of homogeneous elements 
$$\{a_{ij} \in R_{\bf e_i} \mid i=1, \dots , r, \ j=1, \dots , s\}$$ 
such that
\begin{itemize}
\item $a_{i_1 1},  \dots , a_{i_s s}$ is an $M$-filter-regular sequence for any $i_1, \dots , i_s \in \{1, \dots, r\}$, 
\item $b_1, \dots , b_s$ is a system of parameters for $M^{\Delta}/\fkm M^{\Delta}$
\end{itemize}
where $b_j=a_{1j}a_{2j} \cdots a_{rj} \in R_{\bf e}$ for $j=1, \dots , s$. 
Since $b_1, \dots , b_s$ is a system of parameters for $M^{\Delta}/\fkm M^{\Delta}$, we have that 
\begin{eqnarray*}
0&=&\dim (M^{\Delta}/(\fkm M^{\Delta}+(b_1, \dots , b_s)M^{\Delta})) \\
&=&s(M^{\Delta}/(b_1, \dots , b_s)M^{\Delta}) \\
&=&s(M/(b_1, \dots , b_s)M). 
\end{eqnarray*}
Hence, $\Supp_{++} (M/(b_1, \dots , b_s)M)=\emptyset$. 
By Lemma \ref{homsupport}, $\left[M/(b_1, \dots , b_s)M \right]_{\bf n}=(0)$ for all $\bf n \gg \bf 0$ 
so that 
$$M_{\bf n}=[(b_1, \dots , b_s)M]_{\bf n} \ \mbox{for all} \  \bf  n \gg \bf 0.$$ 
Therefore, the set 
$$\{a_{ij} \in R_{\bf e_i} \mid i=1, \dots , r, \ j=1, \dots , s\}$$ 
is a complete reduction of length $s$ with respect to $M$. 
Then the last assertion on the existence of a joint reduction follows immediately from Remark \ref{joint}. 
\end{proof}

\begin{Remark}
{\rm Suppose that the residue field $A/\fkm$ is infinite and let $\ell \geq s(M)$.  
\begin{enumerate}
\item One can construct a complete reduction of length $\ell$ with respect to $M$ 
with the same property in Theorem \ref{compred}. 
\item Kirby and Rees proved the existence of a complete reduction of length $\ell$ with respect to $M$ $($\cite[Theorem 1.6]{KR}$)$. 
Theorem \ref{compred} improves 
their result. Moreover, our proof yields considerable simplification of the one in \cite{KR} and also the original one in \cite{R}.
\item Sarkar and Verma proved the last assertion in Theorem \ref{compred} under the assumption 
that $A$ is an Artinian local ring and $M$ has positive mixed multiplicities and $\dim M^{\Delta} \geq 1$ (\cite[Theorem 2.3]{SV}). 
Theorem \ref{compred} generalizes and improves their result. 
\item Let $\bf q \in \mathbb N^r$ with $|\bf q|=s(M)$. 
Then Trung proved the existence of $M$-filter-regular sequence of type $\bf q$ in bigraded cases (\cite[Lemma 2.3]{Tr}). 
Thus, Theorem \ref{compred} can be viewed
as a common generalization of \cite{KR, Tr}. 
\end{enumerate}
}
\end{Remark}

\section{Proof of Theorem \ref{main}}

In this section, we will give a proof of Theorem \ref{main}. In fact, we prove the following general result. 

\begin{Theorem}\label{general}
Let $R$ be a Noetherian standard $\mathbb N^r$-graded ring such that $R_{\bf 0}=A$ is a local ring. 
Let $M$ be a finitely generated $\mathbb Z^r$-graded $R$-module and let $N$ be a graded $R$-submodule of $M$. 
Let $\bf b \in \mathbb Z^r$ and $\ell \geq |\bf b|+s(M)-1$. Assume that 
\begin{itemize}
\item $[H^i_{R_{++}}(M)]_{\bf n}=(0)$ for all $i \geq 0$ and all $\bf n \geq \bf b. $
\item $[M/N]_{\bf n}=(0)$ for all $\bf n \geq \bf b$ with $|\bf n| = \ell. $ 
\end{itemize}
Then we have that  
$$[M/N]_{\bf n}=(0) \ \mbox{for all} \ \bf n \geq \bf b \ \mbox{with} \ |\bf n| \geq \ell. $$
\end{Theorem}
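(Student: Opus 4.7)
The plan is to induct on $|\bf n|$, combining the joint reduction / filter-regular machinery of Theorem \ref{compred} with the local cohomology vanishing transfer of Proposition \ref{filvanish}.

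First I would pass to the faithfully flat extension $A \to A(X) = A[X]_{\fkm A[X]}$ if necessary so as to assume $A/\fkm$ is infinite; both the hypotheses and the conclusion descend. Setting $s := s(M)$, I would invoke Theorem \ref{compred} to produce a complete reduction $\{a_{ij} \in R_{\bf e_i} \mid 1 \leq i \leq r,\ 1 \leq j \leq s\}$ whose elements form an $M$-filter-regular sequence in every admissible order. For each $\bf q \in \mathbb N^r$ with $|\bf q| = s$, Remark \ref{joint} then supplies the joint reduction $\mathcal J(\bf q) := \{a_{ij} \mid 1 \leq i \leq r,\ 1 \leq j \leq q_i\}$, the sum of whose degrees is exactly $\bf q$, and whose members can be reordered to form an $M$-filter-regular sequence.

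The induction is on $k = |\bf n| - \ell \geq 0$, the base case $k = 0$ being the hypothesis. For $|\bf n| > \ell$, the numerical assumption $\ell \geq |\bf b| + s - 1$ forces $|\bf n - \bf b| \geq s$, so I can choose $\bf q \in \mathbb N^r$ with $|\bf q| = s$ and $\bf q \leq \bf n - \bf b$. Since $M/\mathcal J(\bf q)M$ vanishes in large degree, Lemma \ref{homsupport} shows it is $R_{++}$-torsion, so $M/\mathcal J(\bf q)M = H^0_{R_{++}}(M/\mathcal J(\bf q)M)$. Feeding the hypothesis $[H^i_{R_{++}}(M)]_{\bf m} = 0$ for $\bf m \geq \bf b$ into Proposition \ref{filvanish} along the filter-regular ordering of $\mathcal J(\bf q)$ yields $[M/\mathcal J(\bf q)M]_{\bf m} = 0$ for all $\bf m \geq \bf b + \bf q$; in particular, taking $\bf m = \bf n$, one obtains $M_{\bf n} = \sum_{i:\, q_i \geq 1}\, \sum_{j=1}^{q_i} a_{ij}\, M_{\bf n - \bf e_i}$.

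For each summand, since $q_i \geq 1$, one has $\bf n - \bf e_i \geq \bf b + \bf q - \bf e_i \geq \bf b$ and $|\bf n - \bf e_i| = |\bf n| - 1 \geq \ell$. The inductive hypothesis therefore gives $M_{\bf n - \bf e_i} = N_{\bf n - \bf e_i}$, whence $a_{ij} M_{\bf n - \bf e_i} \subseteq N_{\bf n}$, and summing over all $(i,j)$ closes the induction: $M_{\bf n} \subseteq N_{\bf n}$. I expect the main obstacle to be the sharp bookkeeping at this step — arranging $\bf q$ simultaneously to satisfy $|\bf q| = s$ and $\bf q \leq \bf n - \bf b$ while exploiting the sharp shift (by exactly $\bf q$) provided by Proposition \ref{filvanish}; the hypothesis $\ell \geq |\bf b| + s - 1$ is precisely what makes this possible at every stage of the induction.
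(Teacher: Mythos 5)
Your proposal is correct and follows essentially the same route as the paper's proof: pass to an infinite residue field, induct on $|\bf n|$, at each step pick $\bf q\in\mathbb N^r$ with $|\bf q|=s(M)$ and $\bf q\le\bf n-\bf b$, use Theorem \ref{compred} to obtain a joint reduction of type $\bf q$ that is an $M$-filter-regular sequence, feed the $H^i_{R_{++}}$-vanishing through Proposition \ref{filvanish} to deduce $[M/\mathcal J(\bf q)M]_{\bf n}=0$ (using that a joint reduction makes the quotient $R_{++}$-torsion), and then push the induction down through $M_{\bf n}=\sum_{i,j}a_{ij}M_{\bf n-\bf e_i}$. The only cosmetic difference is at the final bookkeeping: you argue directly that $q_i\ge 1$ forces $\bf n-\bf e_i\ge\bf b+\bf q-\bf e_i\ge\bf b$ (so the inductive hypothesis applies), whereas the paper proves the contrapositive — if $\bf n-\bf e_i\not\ge\bf b$ then $n_i=b_i$ and hence $q_i=0$ — which is logically the same observation.
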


\begin{proof}
By passing to the faithfully flat $R$-algebra $R':=R \otimes_A A'$, where $A':=A[T]_{\fkm A[T]}$ is the localization of 
a polynomial ring $A[T]$ at the prime ideal $\fkm A[T]$, 
we may assume that 
the residue field of $R_{\bf 0}=A$ is infinite. 
We set $s=s(M)$. Let $\bf n \geq \bf b$ with $|\bf n| \geq \ell$. We use induction on $|\bf n|$. The case $|\bf n|=\ell$ is clear. 
Suppose $|\bf n| \geq \ell+1$ and assume that 
$$[M/N]_{\bf m}=(0) \ \mbox{for all} \ \bf m \geq \bf b \ \mbox{with} \ |\bf m|=|\bf n|-1.$$
Note that $\bf n-\bf b \geq \bf 0$ and $|\bf n-\bf b| \geq s$ since $\bf n \geq \bf b$ and $|\bf n|\geq \ell+1 \geq |\bf b|+s$.  
Then one can find $\bf q \in \mathbb N^r$ such that $\bf n-\bf b \geq \bf q$ and $|\bf q|=s$. 
By Theorem \ref{compred}, there exists a joint reduction of type $\bf q$ with respect to $M$
$$\mathcal J:=\{a_{ij} \in R_{\bf e_i} \mid i=1, \dots , r, \ j=1, \dots , q_i \}$$
such that 
$$a_{11}, a_{12}, \dots , a_{1 q_1}, a_{21}, \dots , a_{2q_2}, \dots , a_{rq_r} \ \mbox{is an} \ M\mbox{-filter-regular sequence}.$$ 
By Proposition \ref{filvanish}, the assumption
$$[H^i_{R_{++}}(M)]_{\bf \ell}=(0) \ \mbox{for all} \ i \geq 0\ \mbox{and all} \ \bf \ell \geq \bf b $$
implies that  
\begin{eqnarray}\label{vanish}
[H^i_{R_{++}}(M/(a_{11}, \dots , a_{r q_r})M)]_{\bf \ell}=(0) \ \mbox{for all} \ i \geq 0 \ \mbox{and all} \ \bf \ell \geq \bf b + \bf q. 
\end{eqnarray}
Note that, since the set $\mathcal J$ is a joint reduction with respect to $M$, 
$M/(a_{11}, \dots , a_{rq_r})M$ is $R_{++}$-torsion.  
Hence, by (\ref{vanish}), we have that 
$$\left[ M/(a_{11}, \dots , a_{rq_r})M \right]_{\bf \ell}=\left[ H^0_{R_{++}}(M/(a_{11}, \dots , a_{rq_r})M) \right]_{\bf \ell}=(0)$$
for all $\bf \ell \geq \bf b +\bf q$. 
Therefore, since the vector $\bf n$ satisfies $\bf n \geq \bf b + \bf q$, 
\begin{eqnarray*}
M_{\bf n}&=&[(a_{11}, \dots , a_{rq_r})M]_{\bf n} \\
&=&\sum_{i=1}^r \left(\sum_{j=1}^{q_i} a_{ij}M_{\bf n-\bf e_i} \right). 
\end{eqnarray*}
Here, we note that if $s=0$, then $M_{\bf n}=(0)$ so that the assertion is clear. Suppose $s>0$.

\

\noindent
$\bf{\rm Claim}$ For any $i=1, \dots , r$, the equality 
$$\sum_{j=1}^{q_i}a_{ij}M_{\bf n-\bf e_i}=\sum_{j=1}^{q_i}a_{ij}N_{\bf n-\bf e_i}$$
holds true. 

\

Fix any $i=1, \dots , r$ and set $\bf m:=\bf n-\bf e_i$. If $\bf m \ngeq \bf b$, then $m_i=n_i -1 < b_i \leq n_i$. 
Hence, $n_i=b_i$ so that $q_i=0$. 
Thus, the desired equality is clear. Suppose $\bf m \geq \bf b$. Then $|\bf m|=|\bf n-\bf e_i|=|\bf n|-1$. 
By our induction hypothesis, $M_{\bf m}=N_{\bf m}$ so that the desired equality holds true. 

\

Consequently, we have that 
\begin{eqnarray*}
M_{\bf n}&=&\sum_{i=1}^r \left( \sum_{j=1}^{q_i} a_{ij} M_{\bf n-\bf e_i} \right) \\
&=&\sum_{i=1}^r \left( \sum_{j=1}^{q_i} a_{ij} N_{\bf n-\bf e_i} \right) \\
& \subseteq  &N_{\bf n}. 
\end{eqnarray*}
Hence, $M_{\bf n}=N_{\bf n}$. This completes the proof. 
\end{proof}

Let me give a proof of Theorem \ref{main}. 

\begin{proof}[Proof of Theorem \ref{main}.]
We set $s=s(M)$ and let $\bf a(M)=(a^1(M), \dots , a^r(M)) \in \mathbb Z^r$ be the $\bf a$-invariant vector of $M$. 
Suppose that $M$ is a Cohen-Macaulay graded $R$-module. 
Let $\ell \geq |\bf a(M)|+s+r-1$ and assume that 
$$[M/N]_{\bf n}=(0) \ \mbox{for all} \ \bf n \geq \bf a(M) + \bf e \ \mbox{with} \ |\bf n| = \ell. $$ 
Since $M$ is Cohen-Macaulay, $H^i_{\fkM}(M)=(0)$ for all $i \neq \dim_R M$. 
Hence,  
$$[H^i_{\frak M}(M)]_{\bf n}=(0)$$ 
for all $i \geq 0$ and 
all $\bf n \in \mathbb Z^r$ with $\ n_k > a^k(M)$ for some $k=1, \dots , r$. 
Then, by Proposition \ref{lcvanish}, we have that 
$$[H^i_{R_{++}}(M)]_{\bf n}=(0)$$
for all $i \geq 0$ and all $\bf n \geq \bf a(M) + \bf e$.
Note that $|\bf a(M)|+s+r-1=|\bf a(M)+\bf e|+s-1$. 
By taking $\bf b=\bf a(M)+\bf e$ and applying Theorem \ref{general}, we get that 
$$[M/N]_{\bf n}=(0) \ \mbox{for all} \ \bf n \geq \bf a(M) + \bf e \ \mbox{with} \ |\bf n| \geq \ell. $$
This completes the proof. 
\end{proof}

\begin{Remark}
{\rm 
Suppose $s(M)=0$. Then $M$ is $R_{++}$-torsion by the proof of Lemma \ref{homsupport} so that  
$H^0_{R_{++}}(M)=M$. Hence, the assumption 
$$[H^i_{R_{++}}(M)]_{\bf n}=(0) \ \mbox{for all} \ i \geq 0 \ \mbox{and all} \ \bf n \geq \bf b  $$
in Theorem \ref{general} implies that 
$M_{\bf n}=(0)$ for all $\bf n \geq \bf b$. 
In particular, if $M$ is Cohen-Macaulay with $s(M)=0$, then
$$[H^i_{R_{++}}(M)]_{\bf n}=(0) \ \mbox{for all} \ i \geq 0 \ \mbox{and all} \ \bf n \geq \bf a(M)+\bf e  $$
by Proposition \ref{lcvanish}, and, hence, $M_{\bf n}=(0)$ for all $\bf n \geq \bf a(M)+\bf e$. 
}
\end{Remark}

\section{Applications}

In this section, we will apply Theorems \ref{main} and \ref{general} to multi-Rees algebras of filtrations of ideals. 
Let $I_1, \dots , I_r$ be ideals in a Noetherian local ring $A$. 
Let $\Rees(\bf I) =\Rees(I_1, \dots , I_r)$ be the multi-Rees algebra of $I_1, \dots , I_r$, 
which is a subalgebra 
$$\Rees(\bf I)=A[I_1t_1, \dots, I_rt_r]=\sum_{\bf n \in \mathbb N^r} \bf I^{\bf n} \bf t^{\bf n}$$ 
of a polynomial ring $A[t_1, \dots , t_r]$. Let 
$$\mathcal F=\{ F(\bf n)\}_{\bf n \in \mathbb N^r}$$ 
be a filtration of ideals in $A$, 
i.e., each $F(\bf n)$ is an ideal in $A$ such that 
\begin{itemize}
\item $F(\bf n) \supseteq F(\bf m)$ if $\bf n \leq \bf m$
\item $F(\bf n) F(\bf m) \subseteq F(\bf n+\bf m)$. 
\end{itemize}
A filtration $\mathcal F$ is said to be $\bf I$-filtration if the following one more condition is satisfied. 
\begin{itemize}
\item $\bf I^{\bf n} \subseteq F(\bf n)$ for all $\bf n \in \mathbb N^r. $
\end{itemize}
Typical examples of $\bf I$-filtrations are an $\bf I$-adic filtration $\{ \bf I^{\bf n}\}_{\bf n \in \mathbb N^r}$ and 
its integral closure filtration $\{ \bar{\bf I^{\bf n}} \}_{\bf n \in \mathbb N^r}$. 
For an $\bf I$-filtration $\mathcal F$, we set 
$$\Rees(\mathcal F)=\sum_{\bf n \in \mathbb N^r} F(\bf n) \bf t^{\bf n}$$ 
and call it 
the multi-Rees algebra of the filtration $\mathcal F$. $\Rees(\mathcal F)$ is a subalgebra of $A[t_1, \dots , t_r]$ which 
contains the ordinary multi-Rees algebra $\Rees(\bf I)$ as an $A$-subalgebra. 
The multi-Rees algebra of the $\bf I$-adic filtration $\{ \bf I^{\bf n}\}_{\bf n \in \mathbb N^r}$ 
will be denoted by $\Rees (\bf I)$ for short. Also, 
the multi-Rees algebra of the integral closure filtration $\{ \bar{\bf I^{\bf n}} \}_{\bf n \in \mathbb N^r}$ will be denoted by 
$$\bar \Rees(\bf I)=\sum_{\bf n \in \mathbb N^r} \bar{\bf I^{\bf n}} \bf t^{\bf n}, $$
which coincides with the integral closure of $\Rees(\bf I)$ in $A[t_1, \dots , t_r]$
(\cite[Proposition 5.2.1]{HSw}). 

Then our main application can be stated as follows. 

\begin{Theorem}\label{mainapp}
Let $(A, \fkm) $ be a Noetherian local ring of $\dim A=d>0$. 
Let $I_1, \dots , I_r$ be ideals in $A$ and let $\mathcal F$ be an $\bf I$-filtration such that $\dim \Rees(\mathcal F)=d+r$. 
Suppose that 
\begin{itemize}
\item $\Rees(\bf I) \subseteq \Rees(\mathcal F)$ is module-finite, 
\item $\Rees(\mathcal F)$ is Cohen-Macaulay.
\end{itemize} 
Let $\ell \geq \lambda(I_1 \cdots I_r)-1$ and assume that 
$$\bf I^{\bf n}=F(\bf n) \ \mbox{for any} \ \bf n \in \mathbb N^r \ \mbox{with} \ |\bf n| = \ell. $$ 
Then we have that 
$$\bf I^{\bf n}=F(\bf n) \ \mbox{for any} \ \bf n \in \mathbb N^r \ \mbox{with} \ |\bf n| \geq \ell. $$ 

In particular, if 
$$\bf I^{\bf n}=F(\bf n) \ \mbox{for any} \ \bf n \in \mathbb N^r \ \mbox{with} \ 0 \leq |\bf n| \leq \lambda(I_1 \cdots I_r)-1, $$
then 
$\bf I^{\bf n}=F(\bf n)$ for any $\bf n \in \mathbb N^r$, i.e., $\Rees(\bf I)=\Rees(\mathcal F)$. 
\end{Theorem}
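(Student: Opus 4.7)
The plan is to apply Theorem~\ref{main} to the triple $R := \Rees(\bf I)$, $M := \Rees(\mathcal F)$, and $N := \Rees(\bf I)$ viewed as a graded $R$-submodule of $M$. Under the module-finite hypothesis $\Rees(\bf I) \subseteq \Rees(\mathcal F)$, the ring $R$ is a Noetherian standard $\mathbb N^r$-graded ring with $R_{\bf 0}=A$ and $M$ is a finitely generated $\mathbb Z^r$-graded $R$-module. With these choices, $[M/N]_{\bf n} = F(\bf n)/\bf I^{\bf n}$ for $\bf n \in \mathbb N^r$ and $[M/N]_{\bf n}=0$ trivially for $\bf n \notin \mathbb N^r$, so the conclusion of Theorem~\ref{mainapp} translates exactly into the vanishing conclusion of Theorem~\ref{main}. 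Three items then need to be verified: that $M$ is Cohen-Macaulay as an $R$-module, that $\ell \geq |\bf a(M)| + s(M) + r - 1$, and that $[M/N]_{\bf n} = 0$ for every $\bf n \geq \bf a(M)+\bf e$ with $|\bf n|=\ell$.

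The Cohen-Macaulay property transfers along the module-finite inclusion $R \hookrightarrow \Rees(\mathcal F)$, since depth and Krull dimension of $M$ do not change when the base ring is replaced by a module-finite subring. The numerical inequality will rest on two preparatory facts, which I expect to establish as lemmas earlier in Section~5. First, combining Corollary~\ref{spreadformula} with the observation that $R^{\Delta} = \Rees(I_1\cdots I_r)$ and that $M^{\Delta}$ is module-finite over $R^{\Delta}$ yields $s(M) = \dim(M^{\Delta}/\fkm M^{\Delta}) = \lambda(I_1\cdots I_r)$. Second, I expect a negativity estimate of the form $\bf a(\Rees(\mathcal F)) \leq -\bf e$, i.e.\ $a^i(M) \leq -1$ for each $i$; this is the multigraded analogue of the classical fact that the $a$-invariant of a Cohen-Macaulay Rees algebra is at most $-1$, and should follow because $\Rees(\mathcal F) \subseteq A[t_1,\dots,t_r]$ contains a non-zero-divisor in each coordinate direction $R_{\bf e_i}$. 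Granted these, $|\bf a(M)|+s(M)+r-1 \leq -r + \lambda(I_1\cdots I_r) + r - 1 = \lambda(I_1\cdots I_r)-1 \leq \ell$, as required.

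The vanishing hypothesis is then immediate: since $\bf a(M)+\bf e \leq \bf 0$, any $\bf n \geq \bf a(M)+\bf e$ with $|\bf n|=\ell$ either lies in $\mathbb N^r$, where $[M/N]_{\bf n} = F(\bf n)/\bf I^{\bf n} = 0$ by assumption, or has a negative entry, where $M_{\bf n}=N_{\bf n}=0$ trivially. Theorem~\ref{main} therefore delivers $[M/N]_{\bf n}=0$ for every $\bf n \geq \bf a(M)+\bf e$ with $|\bf n| \geq \ell$, which restricted to $\mathbb N^r$ is the desired $\bf I^{\bf n} = F(\bf n)$. For the ``in particular'' clause, apply the first part with $\ell = \lambda(I_1\cdots I_r)-1$ and combine with the standing hypothesis in the range $0 \leq |\bf n| \leq \lambda(I_1\cdots I_r)-1$ to cover every $\bf n \in \mathbb N^r$.

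The principal obstacle is the $\bf a$-invariant bound $a^i(\Rees(\mathcal F)) \leq -1$ for each $i$. For a single Cohen-Macaulay Rees algebra this is standard, but extending it uniformly to an arbitrary Cohen-Macaulay $\bf I$-filtration requires either a direct regular-element argument in each $\bf e_i$-direction on the top local cohomology module, or a reduction to the $\mathbb Z$-graded case via Lemma~\ref{changeofgrading}. The spread identity, although morally straightforward, also warrants care: one must confirm that no Krull dimension is lost when passing modulo $\fkm$ under the module-finite extension $\Rees(I_1\cdots I_r) \hookrightarrow M^{\Delta}$, which uses that this extension is injective and satisfies going-up.
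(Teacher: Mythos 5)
Your reduction to Theorem~\ref{main} (equivalently, Theorem~\ref{general}) with $R=\Rees(\bf I)$, $M=\Rees(\mathcal F)$, $N=\Rees(\bf I)$ is exactly the paper's route, and the two ingredients you single out, $s(M)=\lambda(I_1\cdots I_r)$ and $\bf a(M)=-\bf e$, are precisely the paper's Corollaries~\ref{analyticspread} and~\ref{a-vector}. Your remark that the weaker bound $\bf a(M)\leq -\bf e$ already suffices (since $M_{\bf n}=(0)$ for $\bf n\notin\mathbb N^r$) is correct, but a mild overcaution, as the paper proves equality. Your proposed reduction of the $\bf a$-invariant bound to the $\mathbb Z$-graded case via Lemma~\ref{changeofgrading} is indeed what the paper does (Lemma~\ref{a-invariant} together with Corollary~\ref{a-vector}); note that no Cohen-Macaulayness enters there, the CM hypothesis being consumed entirely in Theorem~\ref{main} via Proposition~\ref{lcvanish}.

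The one genuine gap is the spread identity $s(M)=\lambda(I_1\cdots I_r)$. You claim it follows because $R^{\Delta}\hookrightarrow M^{\Delta}$ is injective and satisfies going-up, but those two facts only give $\dim R^{\Delta}=\dim M^{\Delta}$; they say nothing about the fiber dimensions over $\fkm$. The induced map $R^{\Delta}/\fkm R^{\Delta}\to M^{\Delta}/\fkm M^{\Delta}$ is finite, giving the easy inequality $\dim(M^{\Delta}/\fkm M^{\Delta})\leq\dim(R^{\Delta}/\fkm R^{\Delta})$, but it is in general not injective, so going-up is not available for it and the reverse inequality does not follow from what you invoke. The paper proves this as a separate proposition: after reducing to $r=1$ and an infinite residue field, one chooses a complete reduction $a_1,\dots,a_{\ell}\in R_1$ of $S:=M^{\Delta}$ with $\ell:=\dim(S/\fkm S)$, sets $T:=A[a_1,\dots,a_{\ell}]\subseteq R^{\Delta}$, shows by a dimension count that $T/\fkm T$ is the full polynomial ring in $\ell$ variables over $A/\fkm$, hence that $T/\fkm T\to S/\fkm S$ is injective, and then, since this map factors through $R^{\Delta}/\fkm R^{\Delta}$, concludes that the finite injection $T/\fkm T\hookrightarrow R^{\Delta}/\fkm R^{\Delta}$ forces $\dim(R^{\Delta}/\fkm R^{\Delta})=\ell$. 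Some argument of this shape is needed; as written, your appeal to going-up leaves the numerical inequality $\ell\geq|\bf a(M)|+s(M)+r-1$ unestablished.
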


In order to prove Theorem \ref{mainapp}, we begin with the following. 

\begin{Proposition}
Let $R \subseteq S$ be a Noetherian $\mathbb N^r$-graded ring extension such that 
\begin{itemize}
\item $R_{\bf 0}=S_{\bf 0}=A$ is a local ring, 
\item $R$ is a standard $\mathbb N^r$-graded ring, and 
\item $R \subseteq S$ is module-finite. 
\end{itemize}
Then $s(R)=s(S)$. 
\end{Proposition}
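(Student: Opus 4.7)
The plan is to compute both $s(R)$ and $s(S)$ via the defining formula $s(M)=\dim\Supp_{++}(M/\fkm M)+1$, viewing $R$ and $S$ as finitely generated $\mathbb Z^r$-graded $R$-modules (the latter is finitely generated precisely because $R\subseteq S$ is module-finite). Since both sets $\Supp_{++}(-)$ live in $\Spec R$, the goal reduces to proving
\[
\Supp_{++}(S/\fkm S)=\Supp_{++}(R/\fkm R)
\]
as subsets of $\Spec R$; and since the graded/$R_{++}$-avoiding conditions depend only on the prime, this in turn reduces to the ideal-theoretic identity ${\rm V}(\Ann_R(S/\fkm S))={\rm V}(\fkm R)$ in $\Spec R$.

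First I would note the easy inclusion: from $(\fkm R)S=\fkm S$ one gets $\fkm R\subseteq \Ann_R(S/\fkm S)$, whence ${\rm V}(\Ann_R(S/\fkm S))\subseteq {\rm V}(\fkm R)$. For the reverse inclusion, I would take any $P\in\Spec R$ with $P\supseteq\fkm R$ and show $(S/\fkm S)_P\neq 0$. Localizing the ring inclusion $R\hookrightarrow S$ at $P$ gives an injection $R_P\hookrightarrow S_P$, so $S_P\neq 0$. Since $S_P$ is a finitely generated module over the local ring $(R_P,PR_P)$, Nakayama's lemma yields $S_P/PS_P\neq 0$. Because $\fkm\subseteq P$, we have $\fkm S_P\subseteq PS_P$, and hence the surjection $S_P/\fkm S_P\twoheadrightarrow S_P/PS_P$ forces $S_P/\fkm S_P\neq 0$, i.e., $P\in {\rm V}(\Ann_R(S/\fkm S))$.

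Combining the two inclusions gives ${\rm V}(\Ann_R(S/\fkm S))={\rm V}(\fkm R)$, and intersecting with the graded primes of $R$ not containing $R_{++}$ yields $\Supp_{++}(S/\fkm S)=\Supp_{++}(R/\fkm R)$. Taking dimensions and adding one gives $s(S)=s(R)$. I do not anticipate a serious obstacle; the only point needing care is that $R\subseteq S$ makes $S$ a faithful $R$-module, which is what guarantees $S_P\neq 0$ at every $P\in\Spec R$ and allows Nakayama's lemma to transfer the support of $R/\fkm R$ to that of $S/\fkm S$.
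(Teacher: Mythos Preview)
Your argument is correct, and it takes a genuinely different route from the paper's proof. The paper first reduces to the $\mathbb N$-graded case via the diagonal formula $s(M)=\dim(M^{\Delta}/\fkm M^{\Delta})$, passes to an infinite residue field, and then runs a Noether-normalization style argument: it picks a complete reduction $a_1,\dots,a_{\ell}\in R_1$ of $S$ (with $\ell=s(S)$), sets $T=A[a_1,\dots,a_{\ell}]$, shows $T/\fkm T$ is a polynomial ring in $\ell$ variables, and uses the finite sandwich $T\subseteq R\subseteq S$ to conclude $\dim R/\fkm R=\dim T/\fkm T=\ell=\dim S/\fkm S$. Your approach stays in the multigraded setting and compares supports directly: the equality $\Supp_R(S/\fkm S)=\Supp_R(R/\fkm R)$ follows from the faithfulness of $S$ over $R$ together with Nakayama, and then one simply intersects with the graded primes avoiding $R_{++}$. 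This bypasses the diagonal reduction, the residue-field extension, and any appeal to complete reductions, yielding a shorter and more elementary proof. The paper's route, by contrast, is tailored to the surrounding machinery (diagonals and reductions) and makes the link to the classical analytic spread more visible.
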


\begin{proof}

By Corollary \ref{spreadformula}, we may assume that $r=1$ and it is enough to show that 
$$\dim R/\fkm R =\dim S/\fkm S.$$ 
By passing to the ring $A[T]_{\fkm A[T]}$ where $T$ is an indeterminate, we can assume that 
the residue field $A/\fkm $ is infinite. 
Let $\ell:=s(S)=\dim S/\fkm S$. Since $S$ is a finitely generated $R$-module, 
there exists $a_1, \dots , a_{\ell} \in R_1$ such that $a_1, \dots , a_{\ell}$ is a complete reduction of $S$, that is, 
$$S_{n}=[(a_1, \dots , a_{\ell})S]_n \ \mbox{for all} \ n \gg 0.$$ 
Let $T:=A[a_1, \dots , a_{\ell}]$ be the subalgebra of $R$. 
Then $T \subseteq S$ is module-finite, and, hence, the natural map $T/\fkm T \to S/\fkm S$ is also finite. 
Therefore, $\dim T/\fkm T \geq \dim S/\fkm S=\ell$. Consider the exact sequence
$$0 \to K \to (A/\fkm )[X_1, \dots , X_{\ell}] \to T/\fkm T \to 0, $$
where $(A/\fkm )[X_1, \dots , X_{\ell}]$ is a polynomial ring. Then, by comparing the Krull dimensions, we have $K=(0)$, and, hence, 
$(A/\fkm )[X_1, \dots , X_{\ell}] \cong T/\fkm T$. 
Thus, the natural map $T/\fkm T \to S/\fkm S$ is injective. Therefore, the natural map 
$T/\fkm T \to R/\fkm R$ is also injective and finite. Hence,  
$\dim R/\fkm R =\dim T/\fkm T=\ell =\dim S/\fkm S$. 
\end{proof}

As a corollary, we immediately get the following. 

\begin{Corollary}\label{analyticspread}
Let $I_1, \dots , I_r$ be ideals in $A$ and 
let $\mathcal F$ be an $\bf I$-filtration of ideals in $A$ such that 
$\Rees(\bf I) \subseteq \Rees(\mathcal F)$ is module-finite. 
Then we have $s(\Rees(\mathcal F))=\lambda(I_1 \cdots I_r)$.
\end{Corollary}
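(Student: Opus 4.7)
The plan is to chain together the preceding Proposition and the Remark following Corollary \ref{spreadformula}. First, I would apply the Proposition to the extension $\Rees(\bf I) \subseteq \Rees(\mathcal F)$. The hypotheses are immediate: $\Rees(\bf I)$ is by construction a standard $\mathbb N^r$-graded ring with $\Rees(\bf I)_{\bf 0}=A$, being generated over $A$ in degrees $\bf e_1, \dots , \bf e_r$ by $I_1 t_1, \dots , I_r t_r$; the ambient ring $\Rees(\mathcal F)$ is again $\mathbb N^r$-graded with degree-$\bf 0$ part equal to $A$; and the module-finiteness of $\Rees(\bf I) \subseteq \Rees(\mathcal F)$ is exactly the hypothesis stipulated in the statement of the Corollary. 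This application yields $s(\Rees(\bf I)) = s(\Rees(\mathcal F))$.

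Second, I would identify $s(\Rees(\bf I))$ with $\lambda(I_1 \cdots I_r)$. The diagonal subalgebra of $\Rees(\bf I)$ consists of the components of multidegree $n\bf e$, so $\Rees(\bf I)^{\Delta} = \bigoplus_{n \geq 0} (I_1 \cdots I_r)^n (t_1 \cdots t_r)^n$, which is isomorphic as an $A$-algebra to the ordinary Rees algebra $\Rees(I_1 \cdots I_r)$. Invoking Corollary \ref{spreadformula} then gives $s(\Rees(\bf I)) = \dim(\Rees(I_1 \cdots I_r)/\fkm \Rees(I_1 \cdots I_r)) = \lambda(I_1 \cdots I_r)$, where the last equality is the standard description of the analytic spread via the special fiber ring. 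This identification was already recorded in the Remark placed right after Corollary \ref{spreadformula}, so I would simply cite it.

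Combining the two equalities gives $s(\Rees(\mathcal F)) = \lambda(I_1 \cdots I_r)$, as desired. There is essentially no obstacle here: the genuine content has already been carried out in the preceding Proposition, where the spread is shown to be preserved under a module-finite extension of graded rings, and in the elementary identification of the diagonal subalgebra of a multi-Rees algebra with the ordinary Rees algebra of the product ideal. The Corollary is therefore a direct two-line consequence of those results.
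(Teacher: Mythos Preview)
Your proposal is correct and matches the paper's intended argument exactly: the paper presents this result with the words ``As a corollary, we immediately get the following,'' and the two ingredients you invoke --- the preceding Proposition (equality of spreads under a module-finite graded extension) and the Remark after Corollary~\ref{spreadformula} (identifying $s(\Rees(\bf I))$ with $\lambda(I_1\cdots I_r)$ via the diagonal subalgebra) --- are precisely what that immediacy refers to. Your verification of the hypotheses of the Proposition is accurate, and nothing further is needed.
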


Next we recall the following general fact on the $a$-invariant. 

\begin{Lemma}\label{a-invariant}
Let $B$ be a Noetherian $\mathbb N^r$-graded ring such that $B_{\bf 0}=A$ is a local ring. 
Let $\mathcal H=\{ H(m)\}_{m \in \mathbb N}$ be
a filtration of graded ideals in $B$. Let $\fkn:=\fkm B+B_+$ be the homogeneous maximal ideal of $B$ and set 
$$\Rees_B(\mathcal H)=\sum_{m \in \mathbb N} H(m)t^m$$ 
the Rees algebra of $\mathcal H$. 
Let $\fkM:=\fkn \Rees_B(\mathcal H)+\Rees_B(\mathcal H)_+$ be the homogeneous maximal ideal of $\Rees_B(\mathcal H)$. 
Consider $\Rees_B(\mathcal H)$ as an $\mathbb N$-graded $B$-algebra and set 
$$a(\Rees_B(\mathcal H))=\sup \{ k \in \mathbb Z \mid [H^{\dim \Rees_B(\mathcal H)}_{\fkM} (\Rees_B(\mathcal H))]_k \neq (0) \}. $$
Assume that $\Rees_B(\mathcal H)$ is Noetherian and $\dim \Rees_B(\mathcal H)=\dim B+1$. 
Then we have 
$$a(\Rees_B(\mathcal H))=-1. $$ 
\end{Lemma}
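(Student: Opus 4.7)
Set $R := \Rees_B(\mathcal H)$ and $d+1 := \dim R$. The claim $a(R)=-1$ amounts to (a) $[H^{d+1}_{\fkM}(R)]_k = 0$ for all $k \geq 0$, together with (b) $[H^{d+1}_{\fkM}(R)]_{-1} \neq 0$. Part (b) follows from (a) once we know $H^{d+1}_{\fkM}(R) \neq 0$, which is immediate because $\fkM$ is the homogeneous maximal ideal of $R$ and $\dim R_{\fkM} = d+1$, so the top local cohomology is nonzero by classical theory.

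For (a), the plan is to combine the long exact sequences of local cohomology for two short exact sequences of graded $R$-modules. The first is the standard
\[
0 \to R_+ \to R \to B \to 0;
\]
since $B$ sits in $t$-degree zero and so $[H^i_{\fkM}(B)]_k = [H^i_{\fkn}(B)]_k = 0$ for $k \neq 0$, its long exact sequence yields $[H^{d+1}_{\fkM}(R_+)]_k \cong [H^{d+1}_{\fkM}(R)]_k$ for all $k \neq 0$. The second is
\[
0 \to R_+(+1) \xrightarrow{\phi} R \to G \to 0,
\]
where $G = \bigoplus_{m \geq 0} H(m)/H(m+1)$ is the associated graded ring of the filtration and $\phi$ sends $h t^{m+1} \in [R_+(+1)]_m$ to $h t^m \in R_m$; this is well defined and $R$-linear because $H(m+1) \subseteq H(m)$. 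Since $\phi(R_+(+1))$ contains the ideal $H(1) R$, a height count together with $\dim R = d+1$ gives $\dim G \leq d$, whence $H^{d+1}_{\fkM}(G) = 0$. Using the shift identity $[H^{d+1}_{\fkM}(R_+(+1))]_k = [H^{d+1}_{\fkM}(R_+)]_{k+1}$ together with the isomorphism above, the long exact sequence of the second sequence yields, for every $k \geq 0$, an exact sequence
\[
[H^d_{\fkM}(G)]_k \to [H^{d+1}_{\fkM}(R)]_{k+1} \to [H^{d+1}_{\fkM}(R)]_k \to 0.
\]

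Provided that $[H^d_{\fkM}(G)]_k = 0$ for $k \geq 0$, the rightmost map is an isomorphism; combined with the standard Artinianity-type vanishing $[H^{d+1}_{\fkM}(R)]_k = 0$ for $k \gg 0$, downward induction on $k$ yields (a), and then (b) follows as noted. The main obstacle is establishing $[H^d_{\fkM}(G)]_k = 0$ for $k \geq 0$, an analogous $a$-invariant-type bound for the associated graded ring $G$ one cohomological degree below the top of $R$. Since $G$ is again a Noetherian graded ring with $\dim G \leq d$, I expect to resolve this by iterating the same strategy on $G$ (descending inductively in dimension, as the associated graded construction itself has a lower-dimensional associated graded), or by invoking known structural vanishing results for top local cohomology of associated graded rings of Noetherian filtrations.
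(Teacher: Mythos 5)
The paper's own ``proof'' of Lemma 5.4 is a one-liner: localize at $\fkn$, then cite the result as well-known from Goto--Nishida and Herrmann--Hyry--Ribbe. You are supplying the direct argument, and the two short exact sequences you set up are exactly the ones those references use, so the skeleton of your approach is right. But there are two real problems in how you execute it.

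First, your ``main obstacle'' is a phantom. From the second short exact sequence and the vanishing $H^{d+1}_{\fkM}(G)=0$ (which only uses $\dim G \le d$), the long exact sequence directly gives a \emph{surjection}
\[
[H^{d+1}_{\fkM}(R_+)]_{k+1} \twoheadrightarrow [H^{d+1}_{\fkM}(R)]_{k}
\]
for every $k$, and the first short exact sequence identifies the source with $[H^{d+1}_{\fkM}(R)]_{k+1}$ whenever $k+1\ne 0$. Surjectivity plus the eventual vanishing $[H^{d+1}_{\fkM}(R)]_k=0$ for $k\gg 0$ already closes the downward induction: if degree $k+1$ vanishes, so does degree $k$. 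You never need the map to be an isomorphism, and hence never need $[H^{d}_{\fkM}(G)]_k=0$. The place where you branch off into ``iterating the strategy on $G$'' or ``invoking structural results for associated graded rings'' is a detour introduced only by this unnecessary strengthening of what the long exact sequence must give you.

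Second, your part (b) is not actually proved. Knowing that $H^{d+1}_{\fkM}(R)\neq 0$ and that all degrees $k\ge 0$ vanish only gives $a(R)\le -1$; it does not force the component in degree exactly $-1$ to be nonzero, since the nontrivial component could a priori sit in some lower degree. To pin down $a(R)=-1$, you need to observe that the same surjections $[H^{d+1}_{\fkM}(R)]_{k+1}\twoheadrightarrow [H^{d+1}_{\fkM}(R)]_{k}$ are also valid for $k\le -2$ (again because $k+1\ne 0$ there), so if $[H^{d+1}_{\fkM}(R)]_{-1}$ were zero, every degree $\le -1$ would vanish too, and combined with (a) this would make $H^{d+1}_{\fkM}(R)=0$, a contradiction. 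With these two corrections your argument becomes a complete proof; as written it stops short and also sends you chasing a condition you don't need. (One more small point: the justification that $\dim G\le d$ is stated somewhat loosely --- ``a height count'' --- and deserves a real argument, e.g., via the extended Rees algebra where $t^{-1}$ is a parameter cutting out $G$; but the conclusion is correct.)
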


\begin{proof}
By passing to the ring $B_{\fkn}$, we may assume that $(B, \fkn)$ is a local ring. Then the assertion is well-known 
(see \cite{GN, HHR} for instance).  
\end{proof}

Let $(A, \fkm)$ be a Noetherian local ring of $\dim A=d>0$. 
Let 
$$\mathcal F=\{F(\bf n, m)\}_{(\bf n, m) \in \mathbb N^{r+1}}$$ 
be a filtration of ideals in $A$. We set 
$$\mathcal G=\{ G(\bf n) \}_{\bf n \in \mathbb N^r}:=\{ F(\bf n, 0)\}_{\bf n \in \mathbb N^r}.$$ 
Consider the multi-Rees algebra of $\mathcal F$ and $\mathcal G$ 
$$\Rees(\mathcal F)=\sum_{(\bf n, m) \in \mathbb N^{r+1}} F(\bf n, m) \bf t^{\bf n} t^m \subseteq A[t_1, \dots , t_r, t],  $$
$$\Rees(\mathcal G)=\sum_{\bf n \in \mathbb N^r} G(\bf n)\bf t^{\bf n}=\sum_{\bf n \in \mathbb N^r} F(\bf n, 0)\bf t^{\bf n}
\subseteq A[t_1, \dots , t_r]. $$
Let $\varphi: \mathbb Z^{r+1} \to \mathbb Z$ be a group homomorphism defined by $\varphi(\bf n, m)=m$. 
Then 
$$\Rees(\mathcal F)^{\varphi}=\sum_{m \in \mathbb N} \left( \sum_{\bf n \in \mathbb N^r} F(\bf n, m) \bf t^{\bf n} \right) t^m$$
is the ring $\Rees(\mathcal F)$ as an $\mathbb N$-graded ring with the homogeneous component of degree $m$  
$$H(m):=\sum_{\bf n \in \mathbb N^r} F(\bf n, m) \bf t^{\bf n}. $$
We set 
$$\mathcal H:=\{ H(m) \}_{m \in \mathbb N}.$$ 
Then 
$\mathcal H$ is a filtration of graded ideals in $B:=\Rees(\mathcal G)$ and 
$$\Rees(\mathcal F)^{\varphi}=\Rees_B(\mathcal H)$$ 
as 
$\mathbb N$-graded $B$-algebras. 
Let $\fkM=\fkm \Rees(\mathcal F)+\Rees(\mathcal F)_+$ be 
the homogeneous maximal ideal of $\Rees(\mathcal F)$ as an $\mathbb N^{r+1}$-graded ring. 
Assume that $\Rees_B(\mathcal H)$ is Noetherian and $\dim \Rees_B(\mathcal H)=\dim B+1$. Then, by Lemma \ref{a-invariant}, 
$$a(\Rees(\mathcal F)^{\varphi})=a(\Rees_B(\mathcal H))=-1.$$ 
On the other hand, by Lemma \ref{changeofgrading}, 
$$\left( H_{\fkM}^{\dim \Rees(\mathcal F)}(\Rees(\mathcal F)) \right)^{\varphi}=
H_{\fkM^{\varphi}}^{\dim \Rees(\mathcal F)}(\Rees(\mathcal F)^{\varphi}) $$
as $\mathbb Z$-graded modules. 
Hence, for any $k \in \mathbb Z$, we have 
\begin{eqnarray*}
\left[ H^{\dim \Rees(\mathcal F)}_{\fkM^{\varphi}}(\Rees(\mathcal F)^{\varphi}) \right]_k
&=&\left[ \left( H_{\fkM}^{\dim \Rees(\mathcal F)}(\Rees(\mathcal F))\right)^{\varphi} \right]_k \\
&=&\bigoplus_{\bf n \in \mathbb Z^r} \left[ H^{\dim \Rees(\mathcal F)}_{\fkM}(\Rees(\mathcal F)) \right]_{(\bf n, k)}. 
\end{eqnarray*}
If we set 
$$a^{r+1}(\Rees(\mathcal F)):=\sup \left\{ k \in \mathbb Z \mid \left[ H^{\dim \Rees(\mathcal F)}_{\fkM}(\Rees(\mathcal F)) \right]_{(\bf n, k)} \neq (0) 
\ \mbox{for some} \ \bf n \in \mathbb Z^r \right\}, $$
then 
$$a^{r+1}(\Rees(\mathcal F))=a(\Rees(\mathcal F)^{\varphi})=-1. $$

\smallskip

This observation implies the following. 

\begin{Corollary}\label{a-vector}
Let $(A, \fkm)$ be a Noetherian local ring of $\dim A =d>0$. 
Let $\mathcal F=\{F(\bf n)\}_{\bf n \in \mathbb N^r}$ be a filtration of ideals in $A$. 
Suppose that 
\begin{itemize}
\item $\Rees(\mathcal F)=\sum_{\bf n \in \mathbb N^r} F(\bf n) \bf t^{\bf n}$ is Noetherian
\item $\dim \Rees(\mathcal F)=d+r$. 
\end{itemize}
Then we have 
$$\bf a(\Rees(\mathcal F))=-\bf e=(-1, -1, \dots , -1). $$ 
\end{Corollary}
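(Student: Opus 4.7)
The plan is to verify $a^i(\Rees(\mathcal F))=-1$ for each $i=1,\dots,r$ by applying the observation immediately preceding the statement, which in turn invokes Lemma~\ref{a-invariant}. By the symmetry of permuting coordinates in $\mathbb N^r$, it suffices to prove $a^r(\Rees(\mathcal F))=-1$; the cases $i<r$ follow by the same argument after swapping the $i$-th and $r$-th coordinates.

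Regard $\mathcal F$ as an $\mathbb N^{r-1}\times\mathbb N$-indexed filtration by writing $\bf n=(\bf n',m)$ with $\bf n'\in\mathbb N^{r-1}$ and $m\in\mathbb N$, and define, as in the observation,
\[
\mathcal G=\{F(\bf n',0)\}_{\bf n'\in\mathbb N^{r-1}},\qquad B=\Rees(\mathcal G),\qquad H(m)=\sum_{\bf n'\in\mathbb N^{r-1}}F(\bf n',m)\,\bf t'^{\bf n'}\subseteq B,
\]
with $\varphi\colon\mathbb Z^r\to\mathbb Z$ the projection onto the last coordinate. Then $\Rees(\mathcal F)^\varphi=\Rees_B(\mathcal H)$ as $\mathbb N$-graded $B$-algebras, and the observation combined with Lemma~\ref{changeofgrading} and Lemma~\ref{a-invariant} gives
\[
a^r(\Rees(\mathcal F))=a(\Rees(\mathcal F)^\varphi)=a(\Rees_B(\mathcal H))=-1,
\]
provided $\Rees_B(\mathcal H)$ is Noetherian and $\dim\Rees_B(\mathcal H)=\dim B+1$. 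The Noetherian property is immediate because $\Rees_B(\mathcal H)$ coincides with $\Rees(\mathcal F)$ as an underlying ring.

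The main step is therefore the dimension identity $\dim B=d+r-1$. Since $B=\Rees(\mathcal F)/J$ with $J=\bigoplus_{n_r\ge 1}F(\bf n)\,\bf t^{\bf n}$ a graded ideal, $B$ is a Noetherian finitely generated $\mathbb N^{r-1}$-graded $A$-algebra. For the inequality $\dim B\ge d+r-1$, note that $\Rees_B(\mathcal H)$ sits inside $B[t_r]$ as a Rees-like $B$-subalgebra, so every fiber of $B\hookrightarrow\Rees_B(\mathcal H)$ has dimension at most $1$; the standard fiber-dimension inequality for finitely generated algebras over Noetherian rings then yields $\dim\Rees_B(\mathcal H)\le\dim B+1$, and substituting $\dim\Rees_B(\mathcal H)=\dim\Rees(\mathcal F)=d+r$ forces $\dim B\ge d+r-1$. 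The matching upper bound $\dim B\le d+r-1$ comes from the analogous argument applied to the inclusion $B\hookrightarrow A[t_1,\dots,t_{r-1}]$, whose fibers over $\Spec A$ have dimension at most $r-1$.

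With the dimension identity in hand, Lemma~\ref{a-invariant} delivers $a(\Rees_B(\mathcal H))=-1$, hence $a^r(\Rees(\mathcal F))=-1$. Iterating this for every $i=1,\dots,r$ by the coordinate permutation indicated above gives $a^i(\Rees(\mathcal F))=-1$ in each coordinate, so $\bf a(\Rees(\mathcal F))=-\bf e$. The main obstacle in this plan is the dimension equality $\dim B=d+r-1$; this is geometrically clear from the Rees-like subring structure but must be pinned down via the Noetherian fiber-dimension formula, since $B\hookrightarrow A[\bf t']$ is not integral or flat in general.
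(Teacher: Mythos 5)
Your overall plan coincides with the paper's: project to the $i$th coordinate, identify $\Rees(\mathcal F)^{\varphi_i}$ with the $\mathbb N$-graded Rees algebra $\Rees_B(\mathcal H)$ of a filtration of graded ideals in $B=\Rees(\mathcal G)$, and feed this into Lemma~\ref{changeofgrading} and Lemma~\ref{a-invariant}. The paper simply asserts $\dim B=d+r-1$; you attempt to prove it, and this is where the gap lies.

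Your fiber-dimension reasoning is incorrect. You claim that since $\Rees_B(\mathcal H)\subseteq B[t_r]$, ``every fiber of $B\hookrightarrow\Rees_B(\mathcal H)$ has dimension at most $1$,'' and analogously that $B\subseteq A[t_1,\dots,t_{r-1}]$ forces the fibers of $\Spec B\to\Spec A$ to have dimension at most $r-1$. Neither statement is true: the fibers of a subring inclusion are not bounded by the fibers of an overring, because tensoring with a residue field does not preserve injectivity. Concretely, if $B=k[x,y]$ and $\mathcal H$ is the $(x,y)$-adic filtration, then $\Rees_B(\mathcal H)\otimes_B k\cong k[u,v]$ is the two-dimensional fiber cone; in general the fiber of a Rees algebra over the closed point is the special fiber ring, whose Krull dimension is the analytic spread and can be as large as $\dim B$. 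Consequently the inequality $\dim S_\fkq\le\dim R_{\fkq\cap R}+\dim(S\otimes_R\kappa(\fkq\cap R))_\fkq$ does not, applied naively, give $\dim\Rees_B(\mathcal H)\le\dim B+1$ (it gives something on the order of $2\dim B$), and the analogous argument does not give $\dim B\le d+r-1$.

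The identity $\dim B=d+r-1$ is nevertheless correct, but must be proved via the standard dimension theory of Rees algebras rather than via fibers. After a Veronese reduction (available since $\Rees_B(\mathcal H)$ is Noetherian) one reduces to the ideal-adic case $\Rees_B(J)$; killing a minimal prime, which is of the form $\fkq B[t_r]\cap\Rees_B(J)$ with $\fkq\in\Min B$, one reduces to $B$ a domain, and then $\Rees_B(J)$ has the same fraction field as $B[t_r]$, i.e., transcendence degree $1$ over $\operatorname{Frac}(B)$, giving $\dim\Rees_B(\mathcal H)\le\dim B+1$. Iterating the same one-variable bound through the $\mathbb N^{r-1}$-grading of $B=\Rees(\mathcal G)$ gives $\dim B\le d+(r-1)$. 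Combined with the lower bound $\dim B\ge d+r-1$ forced by $\dim\Rees_B(\mathcal H)=d+r$, this yields the required equality, after which the rest of your argument runs exactly as in the paper.
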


\begin{proof}
When $r=1$, it is well-known (\cite{GN}). Suppose $r \geq 2$. 
Fix $i=1, \dots , r$ and let 
$B=\Rees(\mathcal G)$ be the $\mathbb N^{r-1}$-graded Rees algebra defined by the filtration 
$$\mathcal G:=\{ G(\bf n) \}_{\bf n \in \mathbb N^{r-1}}$$ 
where $G(\bf n)=F(n_1, \dots, \overset{i}{\check{0}} , \dots , n_r)$. 
Let $\varphi_i: \mathbb Z^r \to \mathbb Z$ be a group homomorphism defined by 
$\varphi( \bf n )=n_i$. Then $\Rees(\mathcal F)^{\varphi_i}$ is the Rees algebra of the filtration of ideals in $B$ and 
$\dim B=d+r-1$. Therefore, $a^i(\Rees(\mathcal F))=a(\Rees(\mathcal F)^{\varphi_i})=-1$ so that $\bf a(\Rees(\mathcal F))=-\bf e$. 
\end{proof}

Theorem \ref{mainapp} is now a direct consequence of Theorem \ref{main}. 

\begin{proof}[Proof of Theorem \ref{mainapp}]
Let $R=\Rees(\bf I)$ and let $M=\Rees(\mathcal F)$. By the assumptions,  
$M$ is a finitely generated Cohen-Macaulay graded $R$-module with $\dim M=d+r$. 
Then $s(M)=\lambda(I_1 \cdots I_r)$ by Corollary \ref{analyticspread} and 
$\bf a (M)=-\bf e$ by Corollary \ref{a-vector}. 
Hence, $| \bf a(M) |+s(M)+r-1=\lambda(I_1 \cdots I_r)-1$. Therefore, we get the assertion by Theorem \ref{general}. 
\end{proof}

The following is the special case of Theorem \ref{mainapp}.

\begin{Theorem}\label{multiRees}
Let $(A, \fkm)$ be a Noetherian local ring of $\dim A=d>0$. 
Let $I_1, \dots , I_r$ be ideals in $A$ such that $\dim \Rees(\bf I)=d+r$. Suppose that
$\Rees(\bf I) \subseteq \bar \Rees(\bf I)$ is module-finite and $\bar \Rees(\bf I)$ is Cohen-Macaulay. 
Let $\ell \geq \lambda(I_1 \cdots I_r)-1$ and assume that 
$$\bf I^{\bf n} \ \mbox{is integrally closed for all} \ \bf n \in \mathbb N^r \ \mbox{with} \ |\bf n| = \ell. $$ 
Then we have that 
$$\bf I^{\bf n} \ \mbox{is integrally closed for all} \ \bf n \in \mathbb N^r \ \mbox{with} \ |\bf n| \geq \ell. $$ 

In particular, if 
$$\bf I^{\bf n} \ \mbox{is integrally closed for any} \ \bf n \in \mathbb N^r \ \mbox{with} \ 0 \leq |\bf n| \leq \lambda(I_1 \cdots I_r)-1, $$
then 
all the power products $\bf I^{\bf n}$ of $I_1, \dots , I_r$ are integrally closed.   
\end{Theorem}

In Theorem \ref{multiRees}, if we assume that 
$A$ is an analytically unramified Noetherian local ring, then the multi-Rees algebra $\Rees(\bf I)$ 
has finite integral closure (\cite{R1961}). 
As a direct consequence of Theorem \ref{multiRees}, we have the following. 

\begin{Corollary}\label{unramified}
Let $A$ be 
an analytically unramified Noetherian local ring of $\dim A=d>0$. 
Let $I_1, \dots , I_r$ be ideals in $A$ such that $\dim \Rees(\bf I)=d+r$. 
Assume that the integral closure $\bar \Rees(\bf I)$ of $\Rees(\bf I)$ is Cohen-Macaulay. 
Suppose that 
$$\bf I^{\bf n} \ \mbox{is integrally closed for any} \ \bf n \in \mathbb N^r \ \mbox{with} \ 0 \leq |\bf n| \leq \lambda(I_1 \cdots I_r)-1. $$ 
Then $\bf I^{\bf n}$ is integrally closed for all $\bf n \in \mathbb N^r$. 
\end{Corollary}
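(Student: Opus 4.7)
The plan is to deduce Corollary \ref{unramified} as a direct consequence of Theorem \ref{multiRees}. Comparing the hypotheses of the two statements, every assumption appearing in Theorem \ref{multiRees} is already present in Corollary \ref{unramified} except for one: the module-finiteness of the extension $\Rees(\bf I) \subseteq \bar\Rees(\bf I)$. All other ingredients (the dimension equality $\dim \Rees(\bf I)=d+r$, the Cohen-Macaulayness of $\bar\Rees(\bf I)$, and the integral-closedness of $\bf I^{\bf n}$ for $|\bf n|\le \lambda(I_1\cdots I_r)-1$) are shared verbatim, so the entire proof reduces to supplying this single missing condition.

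To obtain the module-finiteness, I would invoke the classical theorem of Rees \cite{R1961}: when $A$ is an analytically unramified Noetherian local ring, the integral closure of any finitely generated reduced $A$-algebra is module-finite over it. Since $\Rees(\bf I) = A[I_1t_1,\ldots,I_rt_r]$ is a finitely generated $A$-subalgebra of the polynomial ring $A[t_1,\ldots,t_r]$ (which is reduced because analytic unramifiedness forces $A$, and hence $A[t_1,\ldots,t_r]$, to be reduced), and since by \cite[Proposition 5.2.1]{HSw} the ring $\bar\Rees(\bf I)$ coincides with the integral closure of $\Rees(\bf I)$ in $A[t_1,\ldots,t_r]$, Rees's theorem gives precisely the module-finiteness of $\Rees(\bf I) \subseteq \bar\Rees(\bf I)$.

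With this step in place, all hypotheses of Theorem \ref{multiRees} are verified. Applying that theorem in its ``in particular'' form (i.e.\ starting from integral-closedness for $0 \leq |\bf n| \leq \lambda(I_1\cdots I_r)-1$) yields the conclusion that $\bf I^{\bf n}$ is integrally closed for every $\bf n \in \mathbb N^r$, which is exactly the content of Corollary \ref{unramified}.

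The only potentially delicate point in this plan is making sure Rees's 1961 result applies to the multigraded setting rather than just to a single ideal. This is routine: one may either extend Rees's argument directly to the multi-Rees algebra, or reduce to the single-ideal case by working with a suitable diagonal or Veronese subring (such as $\Rees(\bf I)^{\Delta} \subseteq \bar\Rees(\bf I)^{\Delta}$), where the single-ideal statement of \cite{R1961} applies and then lifts back because the ambient extension is graded and bounded in each component. I expect no genuine obstacle beyond this bookkeeping.
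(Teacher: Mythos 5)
Your proposal is correct and follows essentially the same route as the paper: the paper also derives Corollary~\ref{unramified} from Theorem~\ref{multiRees} by invoking Rees's theorem \cite{R1961} to supply the module-finiteness of $\Rees({\bf I})\subseteq\bar\Rees({\bf I})$ in the analytically unramified case. Your extra caveat about extending Rees's result from a single ideal to the multi-Rees setting is a reasonable point of care, though the paper treats this as immediate.
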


Let $I_1, \dots , I_r$ be monomial ideals in a polynomial ring $S=k[X_1, \dots , X_d]$ over a field $k$. Then,  
the multi-Rees algebra $\Rees(\bf I)$ has finite integral closure since $\Rees(\bf I)$ is a finitely generated algebra over a field and an integral domain. 
Moreover, the integral closure $\bar \Rees(\bf I)$ of $\Rees(\bf I)$ is a normal semigroup ring so that it is Cohen-Macaulay by Hochster's Theorem 
(\cite{Ho}). Thus, by localizing at $(X_1, \dots , X_d)$, we have the following.

\begin{Corollary}\label{application}
Let $S=k[X_1, \dots , X_d]$ be a polynomial ring over a field $k$ and 
let $I_1, \dots , I_r$ be arbitrary monomial ideals in $S$. Suppose that 
$$\bf I^{\bf n} \ \mbox{is integrally closed for any} \ \bf n \in \mathbb N^r \ \mbox{with} \ 0 \leq |\bf n| \leq \lambda(I_1 \cdots I_r)-1. $$ 
Then all the power products $\bf I^{\bf n}$ of $I_1, \dots , I_r$ are integrally closed. 
\end{Corollary}

It would be interesting to know 
whether or not the assumption in Corollary \ref{unramified} that the normalization of the multi-Rees algebra is Cohen-Macaulay is needed if $A$ is a regular local ring. 
In dimension two, it is known as Zariski's theorem on the product of integrally closed ideals
that we do not need the assumption. 
How about in higher dimensions?  
In particular, it would be interesting to know 
whether the following Question holds true or not. 

\begin{Question}
Let $A$ be a ring satisfying that either 
\begin{itemize}
\item $A$ is a polynomial ring over a field, or 
\item $A$ is a regular local ring.
\end{itemize} 
Let $I_1, \dots  ,I_r$ be ideals in $A$. Suppose that 
$$\bf I^{\bf n} \ \mbox{is integrally closed for any} \ \bf n \in \mathbb N^r \ \mbox{with} \ 0 \leq |\bf n| \leq \lambda(I_1 \cdots I_r)-1. $$ 
Then are all the power products $\bf I^{\bf n}$ integrally closed? 
\end{Question}

\section*{Acknowledgments}
The author would like to thank Professor Kazuhiko Kurano for his valuable comments. 
He would also like to thank Professors Shiro Goto, Koji Nishida and Shin-ichiro Iai for their helpful conversations. He thanks the referee for his/her careful reading 
and kindly suggestions.



\end{document}